\def\dfrac#1#2{{\displaystyle {#1 \over #2}}}
\newcommand{\R}{\mathbb{R}}
\theoremstyle{definition}
\newtheorem{theorem}{Theorem}
\newtheorem{lemma}{Lemma}
\theoremstyle{definition}
\newtheorem{definition}{Definition}
\theoremstyle{definition}
\newtheorem{rem}{Remark}
\def\ss{ \scriptsize}
\def\etal{\textit{et.\,al.\.}}
\def\eg{\textit{e.\.g.\.}}
\def\tauN{\tau_N}
\def\tauC{\tau_C}
\def\Nin{ N_{\rm in} }
\def\Cin{ C_{\rm in} }
\def\sigmaN{ \sigma_N }
\def\sigmaC{ \sigma_C }
\def\sigmaM{ \sigma_M }
\def\tauD{ \tau_D }
\def\tauM{ \tau_M }
\def\QCMin{ Q^C_{\rm min} }
\def\QNMin{ Q^N_{\rm min} }
\renewcommand{\d}[1]{\, \mathrm{d} #1}
\def \dt {\d{t}}
\def \ds {\d{s}}
\def\A{\mathbb{A}}
\begin{document}


\title{A mathematical model of marine mucilage, the case of the \textit{liga} on  the Basque coast}

\author{Charles PIERRE\thanks{charles.pierre@univ-pau.fr,\ LMAP UMR- CNRS 5142, IPRA BP 1155, 64013 Pau Cedex, France.} \and Guy VALLET\thanks{guy.vallet@univ-pau.fr,\ LMAP UMR- CNRS 5142, IPRA BP 1155, 64013 Pau Cedex, France.}}

\date{January 9, 2023}
\maketitle
\abstract{In this paper we are interested in modeling the production of mucus by diatoms under the constraint of a nutrient limitation. The initial questioning comes from the observation of the so-called "liga" on the Aquitaine coast. 
The biological origin of the phenomenon is presented and discussed based on the existing litterature.
A mathematical model is proposed and its theoretical properties are analized: well-posedness and differentiability with respect to the model parameters.
Finally, numerical experiments are provided, investigating the possibility of parameter identification for the model using chemostat-type experiments.
\\
Keywords: 
  Phytoplancton, ecological models, diatoms, mucilage, 
  mathematical model, system of differential equations, 
  parameter identification.
\\
 AMScodes : 37N25,
92D25,
92D40.
}

\section{Introduction}
The presence of pelagic muds composed of  marine mucilage is well documented in the Adriatic sea (see \textit{e.g.} \cite{Alcoverro,CALVO199523,Degobbis,Mecozzia,DELNEGRO2005258}) and the sea of Marmara \cite{Aktan,Balkis}, it is also observed on the Basque coast of  the Atlantic ocean, where it is locally called \textit{liga} \cite{ifremer}. 
Formerly occasionally observed on the Basque coast  in spring these last few decades,  the frequency, abundance and duration of liga have been steadily increasing since the early 2000's.  
This sharp increase of the phenomenon is reported by fishermen, whose nets get clogged by liga. 
This degradation of water quality shows a coastal ecosystem in poor health. 
Excessive marine mucilage production is related to blooms of phytoplankton algae, which  can be harmful (in some cases leading to toxin releases) and can have negative impacts on the local ecosystem, as well as on fishing and aquaculture activities.
\\[0.4cm] \indent
As indicated in \cite{Susperregui},  liga is a term of Basque origin derived from \textit{"ligarda"}: a sticky liquid. 
Liga is a pelagic marine mucilage of thick and gelatinous aspect forming macroscopic aggregates. 
Aggregation is caused by \textit{TEP} (Transparent Exopolymer Particles): extra-cellular organic particles, see \cite{Mari}.
The presence of such aggregates is highly correlated with a class of algae: diatoms, a unicellular phytoplankton species. 
Actually, large concentration of various species of diatoms are systematically present in liga samples as analyzed in \cite{Bussard,Susperregui},
the relationship between liga, TEP and diatom blooms will be discussed further  in section \ref{Diatom-blooms}.
\\
Liga formation occurs in areas where salt and fresh waters mix together. 
Early liga production stage corresponds to pelagic colloidal aggregates, with the appearance of a mud as observed on fishing nets where it accumulates.
Under stable anticyclonic conditions, it has time to mature to a pelagic mucous mass that may migrate in the water column. 
Depending on hydro-climatic conditions and on its biological composition, it  may either settle the sea bottom as a mucous mass or reach the surface as floating mosses. 
Liga forms temporary pelagic ecosystems containing a large diversity of microorganisms (viruses, bacteria, phyto and zoo-plankton). 
Two blooms of liga are usually observed  every year: in spring and autumn. 
It disappears with the first winter storms but can reappear with a low intensity during  winter anticyclonic conditions.
\\[0.4cm] \indent
Liga production scenario is the following.
Under suitable environmental conditions, a bloom (or an important increase) of  diatoms occurs, followed by mucilage secretion. 
When the density of population and the mucus concentration are important enough, it can aggregate and form liga, see \textit{e.g.} \cite[Supplementary Material S2)]{Mari}).
\\
Research on mucilage production by diatoms such as \cite{Reynolds:2007gt} indicates that it might be induced by an imbalance in intracellular nutrient concentrations  and be no more than a homeostatic mechanism to balance cell stoichiometry.
This assumption is discussed in section \ref{sec:stoichiometry} and an imbalance in nutrients C:N ratio is put forward.
\\
The aim of this paper is to propose a model of liga production by diatom populations. 
The assumption of a C:N imbalance as a trigger of mucilage production is considered and a two nutrient model will be proposed based on carbon and nitrogen uptakes by diatoms. 
\\[0.4cm] \indent
This paper is organized as follows.
Section \ref{diatoms} is devoted to a presentation of diatoms, their blooms and the causes of liga.
A  mathematical model of liga production is presented in section \ref{sec:math-mod}.
This will be followed by a mathematical study of this model: existence and uniqueness of the solution, some qualitative information and the stability of the solution with respect to the parameters of the model. 
The last section will be devoted to the numerical study of
the model. Firstly a biological setting of the model inducing mucilage production is introduced. Then the ability to perform a parameter identification of the model based on chemostat experiments is analyzed.


\section{Diatoms}
\label{diatoms}
Diatoms are brown unicellular phytoplankton algae  with a siliceous shell called frustule, a bipartite silica cell wall that encloses the eukaryotic protoplast. It protects the protoplast and provides routes for nutrient uptake, gaseous exchanges and  extra-cellular secretions.
\\
In most cases, diatom life cycle alternates  a vegetative phase (lasting months or years) and a rejuvenation phase (few days) usually preceded by sexual reproduction  \cite{Scala,Seckbach}.
\\
Diatoms have one or two cytoplasmic vacuoles with multiple roles. 
They have reserves that are easily mobilized storage in view of less favorable periods or shortages.
The frustule  provides a supporting structure to the vacuole and protects nutrients from competitors for subsequent cell divisions as noticed in  \cite{SMETACEK199925}.

\subsection{Liga, TEP and diatom blooms }
\label{Diatom-blooms}
As mentioned in the introduction, mucilage aggregation is caused by TEP\footnote{TEP: Transparent Exopolymeric Particles}, see \cite{Mari}.
Diatom vacuoles constitute an important supply of nutrients, in particular $\beta-1,3$-glucan  chrysolaminarin, a carbohydrate, which, according to \cite{Sayanova}, contributes to the production of extracellular polymers.
Indeed, diatoms exude \textit{EPS} (Extra-cellular Polymeric Substances) constituting, under suitable conditions, precursors for the production of TEP, which (mixed with other organic matter particles) then transform to marine snow \cite{Genzer,Mari,Passow}.
TEP are buoyant particles that should stay suspended in surface waters, and sinking is related to the size, the porosity and the density of aggregates. 
Ballasting can be due to  incorporation of heavy minerals or atmospheric dusts \cite{Mari}, or modern pollution such as residue from spilled oil  \cite{Burd,Genzer}.
In \cite{Bartual}, the authors noticed that diatoms also release poly-unsaturated aldehydes during blooms which contribute to increase TEP sizes and generate larger aggregates. 
\\[0.2cm] \indent
On the Basque coast, a first episode of liga usually appears in spring, a second in autumn, with mainly a bloom of diatoms \cite{Susperregui}. It has been reported in \cite{Susperregui2} that the main species of diatoms found in bloom  situations of liga are \textit{Ceratoneis closterium} (always observed), \textit{Leptocylindus danicus} (in June 2013), \textit{Pseudo Nitzschia group B 2 \& B1} (in October 2013) and \textit{Thalassiosira gravida} (in march 2014). In case of a strong presence of liga, the following species have been found too: diatoms (in addition to those indicated above \textit{Navicula spp.},  \textit{Guinardia flaccida}, \textit{Leptocylindrus minimus}, \textit{Guinardia delicatula}, \textit{Rhizosolenia setigera}); dinoflagelates (\textit{Protoperidinium oceanicum});  
gelatinous zooplankton (\textit{Siphonophora Diphiidae}, \textit{Oikopleura sp.}, \textit{Sagitta sp}).
\\[0.4cm] \indent
Diatom concentration is generally higher in the beginning of spring and in autumn, when nutrients start to be abundant again and when light intensity and day length are favorable. 
Furthermore, algal blooms are known to be often initiated by a diatom bloom, as observed  in the Adriatic, where diatom \textit{Cylindrotheca Closterium} is involved in extracellular carbohydrates  production  under nutrient limitation, see \cite{Alcoverro}. 
Several reasons can be pointed out for that.
Firstly, \cite{Allen:2011zg} highlighted a functional cycle of urea conferring to diatoms an ecological advantage during periods of nitrogen limitations to other phytoplankton taxa \cite{Bussard,Levitan,Smith}.
Moreover, diatom brown color reveals the presence of carotenoid, utilized together with chlorophyll \textit{a} and \textit{c}, this ensures a large  photosynthetic light harvesting. 
Eventually, diatoms are capable of C4 photosynthesis,  allowing a more efficient utilization of available CO$_2$, in particular in less favorable conditions  \cite{Scala}. 
\\[0.4cm] \indent
Diatom ability to grow more effectively than other species when environmental conditions get more favorable has also been investigated in terms of mathematical models and statistical data analysis.
In  \cite{Schapira,SMETACEK199925}, it has been expressed by computing a growth rate  which is higher for diatoms than other phytoplankton types.
Based on this observation, Litchman \etal \cite{Litchman} have been interested in modeling a coefficient of nutrient  competitive uptake ability and estimated this coefficient for diatoms, coccolithophorids, dinoflagellates and green algae. 
By compiling a database of nutrient uptake and nutrient-dependent growth parameters, the authors  have fitted the parameters of a model based on quota-Droop's law \cite{droop1968} and  Michaelis-Menten-Monod \cite{turpin1988} equation. 
Their analysis showed the ability of diatoms to respond more effectively to the return of good environmental conditions. 

\subsection{Stoichiometry and Redfield ratio}
\label{sec:stoichiometry}
Redfield discovered in 1934 that the ratio of carbon, nitrogen and phosphorus (C:N:P) is  nearly constant in oceans, in phytoplankton biomass as well as in dissolved nutrients. 
Concerning diatoms, the Redfield stoichiometry is usually given by $C:N:P = 106:16:1$  \cite{Brzezinski}, even if it is known that it may fluctuate to become C:N:P = 163:22:1 \cite{Concentrationsandratiosofparticulate,Klausmeier}.
These values have to be understood as averages and may vary significantly locally. 
In particular, intracellular stoichiometry can strongly vary due to nutrient limitations \cite{Saudo-Wilhelmy}.
\\[0.4cm] \indent
Imbalance in stoichiometry can lead to many consequences. Algal growth rate can be impacted 
and abnormal secretions are induced as developed in \cite{Elser}: \textit{"When nutrient element X is in excess relative to requirements, consumers increase the specific rate of loss of that element (\textit{via} excretion or egestion)"}.
Resulting in particular in a synthesis and secretion of mucilages, as pointed out in the conclusion of \cite{Mecozzia}, "\textit{[...] the formation of mucilages is an alteration of the natural humification process that occurs when the organic matter degradation phase becomes slower than the synthesis phase. This alteration can [...] support the formation of refractory materials such as macro-aggregates and mucilages.}" 
Thus, together with a limitation of P stopping cellular division, 
stoichiometry imbalance  may be a hindrance to the development of bacteria  in charge of the degradation of this organic matter (mucilage).
The consequence is probably the development of  liga.

\subsubsection{Imbalance N:P}
The excess of nitrogen in relation to phosphates is pointed out by \cite{Susperregui} to be a possible trigger of liga production in a context of P limitation.
A prevailing influence of nitrogen is also mentioned in situation of mucilage in the Adriatic sea in \cite{BUZZELLI19971171},  in the Tasman Bay in \cite{MacKenzie}, where a similar imbalance is observed but interpreted as an effect rather than a cause of TEP production.
\\[0.4cm] \indent
In case of high level of N in comparaison to P, one is interested in polymeric secretions with a high content of N rather than P, whereas TEP have a composition poor in N.
Meanwhile, CSP 
\footnote{CSP: Coomassie Stainable Particles}
are N-rich exopolymer particles that have recently been studied in \cite{Thornton}:
the author notes that  the  C:N ratio  for CSP is 3.8:1 (lower than Redfield's ratio of 6.6:1), though it can be estimated to be 26:1  for TEP.
\\[0.4cm] \indent
If CSP can be understood as the excretion of an excess of N, it is remarked in \cite{Mari} that contrary to TEP, CSP do not seem to significantly impact aggregation processes and  may not play the same pivotal role in carbon cycling as TEP. 
In conclusion, abnormal TEP  secretion inducing liga formation may rather be searched in a context of P limitation and of  C:P or C:N imbalances.

\subsubsection{Imbalance C:P or C:N}
From a geological point of view,  marine diatoms have been at the front line of the regulation of atmospheric partial pressure of carbon dioxide  (pCO$_2$),
in particular by reducing atmospheric carbon dioxide levels \cite{Cermeno}. 
Accumulations of diatom-rich sediments extend back to the late Cretaceous.  
More recently, diatom-rich lake sediments are from the Eocene, with large deposits found in the Miocene \cite{diatomite}.
Harwood \etal \cite{CretaceousRecordsofDiatomEvolution} dated the emergence of planktonic diatom species at the end of the early cretaceous (Albian), a marine diatom extinction during the first part of the late cretaceous (Cenomanian/Turonian) and the beginning of pelagic diatom deposits at the early Campanian. 
However, these dates may be older \cite{marinediatomsinupperAlbianamber,Accelerateddiversification} and could be linked to local peaks of partial pressure of carbon dioxide identifiable during these periods \cite{CO2Cretaceous}.
Afterwards, diatom population growth declined during the Cenozoic era \cite{QuantifyingtheCenozoicmarinediatom} with a global decreasing of the partial pressure of carbon dioxide perturbed by local peaks \cite{ConvergentCenozoicCO2history}. 
\\[0.4cm] \indent
Oceans absorb huge quantities of inorganic carbon due to important anthropic emissions of CO$_{\text{2}}$ into the atmosphere, increasing the stoichiometry of carbon to nitrogen. 
This has implications for a variety of marine biological and biogeochemical processes, and underscores the importance of biologically driven feedbacks in the ocean to global climate change. 
It is pointed out in \cite{Riebesell}  that for the same uptake of inorganic nutrients, the production of organic matter increases significantly with higher CO$_{\text{2}}$ partial pressures, probably in the form of TEP.
\\[0.4cm] \indent
Many authors related strong correlations between TEP production by plankton assemblages dominated by diatoms and cyanobacteria, and CO$_{\text{2}}$ concentration \cite{Barcelos,Brembu,Danovaro,Heemann,Hein,Hessen,Mari,Schapira,Torstensson,Wei} with different interpretations of the usefulness of these secretions: the excretion of an excess to rebalance Redfield's stoichiometry, buoyancy control, grazing repellents or to attract bacteria in charge of mucus degradation. 
\\[0.4cm] \indent
Following \cite{Mari}, the theoretical  Redfield's ratio C:N:P=106:16:1 is strongly related to the equation of production of phytoplankton biomass given by,
\begin{align*}
  106\,\text{CO}_2 + 122\,\text{H}_2\,\text{O} + 16\,\text{HNO}_3 + \text{H}_3\text{PO}_4 \longrightarrow \text{(CH}_2\text{O)}_{106}\text{(NH}_3\text{)}_{16}\text{(H}_3\text{PO}_4\text{)} + 138\,\text{O}_2
\end{align*}
whereas, without particular link to Redfield's ratio, the equation of production of TEP\footnote{under cover of the acceptance of its chemical writing} is given by
\begin{align*}
  106\,\text{CO}_2 + 105\,\text{H}_2\text{O}  \longrightarrow \text{C}_{106}\,\text{H}_{201}\text{O}_{105}+ 106\,\text{O}_2.
\end{align*}
An imbalance C:P or C:N then could trigger TEP production by diatoms to rebalance their  Redfield's ratio. This has been observed in \cite{Mari}: 
under optimal growth conditions, elevated pCO$_2$ leads to increased TEP production by diatom  \textit{Skeletonema costatum}.
\\[0.4cm] \indent
Of course, reality is more complex.
It is in particular species dependent and in \cite{Mari} the same experiment for diatom  \textit{Thalassiosira weissflogii} did not change TEP production (which however increased at sub-optimal irradiance and growth temperature).
The variation of TEP production can have other origins: nutrient limitation (cellular carbon overflow),  temperature  (an increase of temperature can enhance extracellular releases), see \cite{Mari}. 
\section{Model statement}
\label{sec:math-mod}
  Models of marine ecosystem  require to consider a large number of variables
  such as different types of phytoplankton and of grazers, of nutrients, environmental conditions or photosynthesis models.
  Its vocation is to describe biological diversity over a rather large period of time. 
  We refer \eg  to \cite{Dutkiewicz,Follows,MathildeCadier} and \cite{FlynnMitra} for such classic models in plankton ecology, which however do not model mucilage production.  
\\[0.4cm]\indent
Our aim is to propose a mathematical model that describes the evolution of a phytoplankton population during a period of bloom and the corresponding evolution of TEP production.
Up to a prescribed threshold of TEP concentration, the first stage of liga (as described in introduction section) will be considered to be reached.  
Pelagic ecosystems being very complex, only the key features of our understanding of liga production will be kept, leading to an inevitable and drastic simplification of reality.
In the light of what has been said in previous sections, diatom  phytoplankton species will be considered to be at the origin of TEP production,  
in response to  an imbalance of C:N stoichiometry  during a bloom of population with P limitation. 
Predation will be neglected: during a diatom bloom, predators are expected to grow with a delay (ecological advantages of diatoms in terms of growth are developed in section \ref{Diatom-blooms}).
More generally let us emphasize again that the goal is to model the first stage leading to liga production and not to study the global ecosystem of liga. 
\\[0.4cm]\indent
Phytoplankton population models classically rely on \textit{quota}. 
The basic tools are: Michaelis-Menten-Monod rule (see \cite{turpin1988}) for the uptake of nutrients 
and  Droop-Liebig's law (see \cite{droop1968}) to specify the impact of quotas and limitation on the phytoplankton growth. 
\\
Let us briefly recall these concepts.
  For a given species with concentration $D$ and a given nutrient with concentration $N$, the  associated \textit{quota} is defined by $Q = \frac N D$. 
\\
  The quota-Droop's law models a net-growth rate by 
  $\mu_\infty \Big(1-\frac{Q_{\min}}{Q}\Big) - m$
  (where $Q_{\min}$ is the minimum quota, $\mu_\infty$ is the growth rate of species at infinite quota, $m$ is the mortality rate).
\\
  The Michaelis-Menten-Monod  equation defines a nutrient uptake rate by 
  $ V_{\max} \frac{N}{K+N}$ 
  (where $V_{\max}$ is the maximum nutrient uptake rate, $K$ is the half-saturation constant for nutrient uptake).
\subsection{Discussion of some mathematical models}
\label{sec:model-intro}
In this section are presented a selection of plankton models found in the literature.
Without attempting to be comprehensive, we want to see how to adapt these models to go towards liga modeling.
\\
In \cite{Legovic}, a phytoplankton  growth model relying on multiple nutrients is proposed, including nutrient limitation and stoichiometry information. 
In \cite{Baird} a similar model is inserted in a more complex setting involving diffusion/convection, light, sinking rates and predators. 
\\
The mathematical model in \cite{Legovic} has been confronted 
to experimental data from chemostat\footnote{chemostat: a system for the culture of microorganisms
with controlled chemical composition.}
experiments in \cite{Legovic}.
The authors concluded that their results are relatively satisfactory, except for  phytoplankton stoichiometry during exponential growth at high nutrient levels.
Two reasons can be raised for that. Firstly, no process of extra-cellular secretion is modeled, which would allow to eliminate a nutrient excess. 
Secondly,  
in the quota equilibrium equation, no consumption negative term for growth is present excepted  cell-division.
\\
In \cite{Peace}, a predator prey model is proposed including  one algal and one grazer species and a nutrient in excess. 
A waste of up-taken  nutrient not used for growth is added, 
as in  \cite{Baird}, 
with a feedback from the producer biomass onto nutrient concentration.
Meanwhile, no quota-decrease by cell division is taken into account in the model. 
It is not obvious at all that an excess of up-taken nutrient may return as available nutrients as proposed by  \cite{Baird,Peace}.
Consider for instance carbon: it is up-taken by the phytoplankton as inorganic carbon and then transformed, after photosynthesis, into polysaccharides and cannot be secreted back to the extra-cellular media as inorganic carbon to be up-taken again.
Alternatively, 
being born in mind the importance of stoichiometry imbalances previously discussed in section \ref{sec:stoichiometry}, 
it seems necessary to insert in the model 
a possible secretion of  a given nutrient in excess, especially when another nutrient is limited.
This certainly helps to explain the problem of phytoplankton stoichiometry during exponential growth in \cite{KlausmeierLitchman}.
\\[0.2cm]\indent
Thus, it appears as  necessary to add to the model one or more additional variables corresponding to secretion of EPS (Extracellular polymeric substances) 
when given nutrients are in surplus: TEP concerning C or CSP for N and C. 
\subsection{A  two nutrient model}
A chemostat experiment is considered  with an inflow/outflow flushing rate given by $a>0$.  
Diatom concentration (the biomass producer in our case) 
is denoted $D$.
Two nutrients are considered, nitrogen and carbon, the associated  concentration are  
$N$ and $C$ in the extra cellular media, and  
$P_N$ and $P_C$ in the phytoplankton.
The associated quota are $Q_N=\frac{P_N }{D}$, $Q_C=\frac{P_C}{ D}$ and
$M$ will denote EPS concentration.
\\[0.4cm]\indent
Let the chemostat inlet have a nitrogen concentration 
$\Nin $, the inflow/outflow nitrogen balance is $a(\Nin -N)$.
The diatom uptake rate in nitrogen is denoted $\tauN$ and in the absence of other species in the chemostat, the evolution law for nutrient $N$ is,
\begin{align*}
  \diff{N}{t} = a(\Nin -N) - \tauN D
\end{align*}
and similarly for $C$,
\begin{align*}
  \diff{C}{t} = a(\Cin -C) - \tauC D
\end{align*}
Denoting $\sigmaN$ the consumption rate of $N$ by diatoms (\textit{e.g.} for growth) and by $m_D$ the death-rate of diatoms, 
the concentration of $N$ in the diatoms is ruled by
\begin{align*}
  \diff{P_N}{t} = (\tauN-\sigmaN) D - (a+m_{D}) P_N, 
\end{align*}
and similarly for $C$,
\begin{align*}
  \diff{P_C}{t} = (\tauC-\sigmaC) D - (a+m_{D}) P_C.
\end{align*}
The concentration in diatoms will be given by,
\begin{align*}
  \diff{D}{t} = \tauD D - (a+m_D) D,
\end{align*}
where $\tauD$ is the growth rate of the diatom population. 
Finally, mucilage concentration satisfies,
\begin{align*}
  \diff{M}{t} = \tauM D - a M,
\end{align*}
where $\tauM$ is the secretion rate of EPS. 
\\
In terms of quota, differentiating their definition 
$\frac{\d Q_N}{\dt} = \frac{1}{D}\,\frac{\d P_N}{ \dt} \;-\, \frac{P_N}{D^2} \, \frac{\d D}{\dt}$,
\begin{align*}
  \diff{ Q_N}{t}  =&\tauN-\sigmaN - Q_N\tauD,
  \\
  \diff{ Q_C}{t} =& \tauC-\sigmaC - Q_C\tauD,
\end{align*} 
The uptake rate is given by Michaelis-Menten-Monod law as introduced in section \ref{sec:model-intro},
\begin{align}
  \label{tau_N}
  \tauN &= \tau_N(t,N) = V^N_{\max}(t) \frac{N}{K_N(t)+N},
  \\
  \label{tau_C}
  \tauC &= \tau_C(t,C) = V^C_{\max}(t) \frac{C}{K_C(t)+C}.
\end{align}
\\[0.2cm]\indent
  In the sequel $x^+$ denotes the positive part of the real number $x$: $x^+=\max(0, x)$.
Cell division can take place if the quota of each nutrient is greater than a threshold: $Q_C \geq \QCMin>0$ and $Q_N \geq \QNMin>0$. Then, following \cite{Legovic}, Droop's growth function coupled with Liebig's law yields
\begin{align}
  \label{tau_D}
  \tauD = \tau_D(t,Q_C,Q_N)
  = \mu_D(t) 
  \min\left (
  (1-\dfrac{\QCMin}{Q_C})^+
  ,~
  (1-\dfrac{\QNMin}{Q_N})^+
  \right )
\end{align}
where $\mu_D$ is the maximal division rate at infinite quota.  
In \textit{normal conditions}, the diatom stoichiometry ratio $C:N$ is a positive constant denoted $\alpha$: diatoms will use $\alpha $ units of $C$ for one unit of $N$. 
Considering that EPS release is based on the presence of extra $C$ compared to $N$, \textit{i.e.} if $Q_C > \alpha Q_N$ and, of course, if $Q_C>\QCMin$, the EPS release rate $\tauM$ will be modeled by,
\begin{align}
  \label{tau_M}
  \tauM = \tau_M(t,Q_C,Q_N) = 
  \mu_M(t) 
  \min\left (
  (1-\dfrac{\QCMin}{Q_C})^+
  ,~
  (1-\dfrac{\alpha Q_N}{Q_C})^+
  \right ),
\end{align}
with $\mu_M$  the maximal release rate of mucilage by diatoms.
\\[0.2cm]
Concerning nutrient consumption, 
\begin{align}
  \label{sigma_N}
  \sigmaN = \sigmaN(t,Q_C,Q_N) = 
  \Theta_D(t) 
  \min\left (
  (1-\dfrac{\QCMin}{Q_C})^+
  ,~
  (1-\dfrac{\QNMin}{Q_N})^+
  \right ),
\end{align}
where $\Theta_D$ the  maximal consumption rate of diatoms for growth at infinite quota. 
\\
Biomass production for the growth of diatom population  
respects the stoichiometry $\alpha $ units of $C$ for one unit of $N$,
thus the carbon consumption rate is split into two parts, one for diatom growth and the second for mucilage production,
\begin{displaymath}
  \sigmaC = \alpha\,\sigmaN + \sigmaM
\end{displaymath}
with,
\begin{align}
  \label{sigma_M}
  \sigmaM =  \sigmaM(t,Q_C,Q_N) =
  \Theta_M(t)
  \min\left (
  (1-\dfrac{\QCMin}{Q_C})^+
  ,~
  (1-\dfrac{\alpha Q_N}{Q_C})^+
  \right ),
\end{align}
where $\Theta_M$ is the maximal consumption rate of diatoms for mucilage.
\\[0.2cm]\indent
In short, we have the system:
\begin{equation}
  \label{eq:system-S} \tag{S}
  \frac{\d X}{\d t}=F(t,X),
\end{equation}
where the model variable have been gathered in the column vector 
$X= (N, C, Q_N, Q_C, D,M)^T$ and where the function $F$ summarizes as,
\begin{align*}
  \diff{N}{t} =& a(\Nin(t) -N) - \tauN(t,N) D ~:=~F_1(t,X),
  \\[0.1cm]
  \diff{C}{t} =& a(\Cin(t) -C) - \tauC(t,C) D~:=~F_2(t,X),
  \\[0.1cm]
  \diff{ Q_N}{t}  =&  \tauN(t,N)-\sigmaN(t,Q_C,Q_N)  - \tauD(t,Q_C,Q_N)Q_N~:=~F_3(t,X),
  \\[0.1cm]
  \diff{ Q_C}{t} =&  \tauC(t,C)- \alpha\, \sigmaN(t,Q_C,Q_N) - \sigmaM(t,Q_C,Q_N) - \tauD(t,Q_C,Q_N)Q_C~:=~F_4(t,X),
  \\[0.1cm]
  \diff{D}{t} =& \tauD(t,Q_C,Q_N) D - (a+m_D(t)) D~:=~F_5(t,X),
  \\[0.1cm]
  \diff{M}{t} =& \tauM(t,Q_C,Q_N) D - a M~:=~F_6(t,X).
\end{align*}
Functions
$\tau_N$ ,$\tau_C$, $\tau_D$, $\tau_M$, 
$\sigma_N$ and $\sigma_C$ 
(given by 
(\ref{tau_N}) to (\ref{sigma_M}) respectively), 
depend on  $K_N(t)$, $K_C(t)$,  $V_{\rm max}^N(t)$, $V_{\rm max}^C(t)$, $\mu_D(t)$, $\mu_M(t)$, $\Theta_D(t)$ and $\Theta_M(t)$.
  These parameters do \textit{a priori} depend on the time $t$ (since they are functions of lightness and temperature, see \textit{e.g.} \cite{Dauta}), although they will be set to constants in numerical experiments.
The same remark holds for $\Nin(t)$, $\Cin(t)$ and $m_D(t)$.
The model finally also depends on the constant parameters
$a$, $\QNMin$, $\QCMin$ and $\alpha$.
\section{Model mathematical analysis}
For technical reasons, but fully compatible with biological modeling, the following assumptions are made on the parameters of system \eqref{eq:system-S},
\begin{align}
  \label{bornes}
  &\exists \mathcal{M}>0, \quad a, \QCMin, \QNMin, \alpha \in (\frac1{\mathcal{M}},\mathcal{M}), \quad m_D\in C\Big([0,+\infty),(0,\mathcal{M})\Big),
  \\  \label{bornes2}&
                       \Nin , \Cin , V^N_{\max}, K_N, V^C_{\max}, K_C, \mu_D, \mu_M, \Theta_D, \Theta_M  \in C\Big([0,+\infty),(\frac1{\mathcal{M}},\mathcal{M})\Big).
\end{align}
On the contrary of the other parameter functions, the mortality rate $m_D$ is allowed to vanish, which is relevant in the present context of a  chemostat-type experiment.
\\[0.2cm] \indent
Let us fix what we mean by a solution to the problem. 
\begin{definition}\label{def_solution}
A solution to system \eqref{eq:system-S} is any absolutely continuous function $X$ with non negative values, solution to the ordinary differential equation $\frac{\d X}{\d t}=F(t,X)$ associated with the initial condition $X(0)=X_0$ where $X_0\geq 0$.
\\
For convenience, for $X\in\R^6$, $X \ge 0$ means that $X_i\ge 0$ for $1\le i\le 6$.
\end{definition}

Our main result is the following.  
\begin{theorem}
  \label{theo:main-result}
Under assumptions \eqref{bornes} and \eqref{bornes2}, 
\begin{enumerate}
\item there exists a unique maximal solution to system \eqref{eq:system-S} in the sense of definition \ref{def_solution}, 
\item this unique solution is a global solution (defined for $t\in |0,+\infty)$),
\item if there exists a time $t_0\geq 0$ such that $X_3(t_0) = Q_{N}(t_0) \ge \QNMin$ then $Q_{N}(t) \ge \QNMin$ for any $t\geq t_0$. 
Similarly, if $X_4(t_0) = Q_{C}(t_0) \ge \QCMin$ then $ Q_{C}(t) \ge \QCMin$ for any $t\geq t_0$ 
(in other words, the solution remains \textit{biologically relevant}).
\end{enumerate}
Moreover, in the case of constant parameters, 
the following regularity with respect to the parameters holds.
Denote 
\begin{align*}
  A=(a,\Nin,\Cin,V^N_{\max}, K_N, V^C_{\max}, K_C, \QCMin, \QNMin, \alpha, \mu_D, \mu_M, \Theta_D, \Theta_M, m_D),
\end{align*}
a vector of constant parameters  in $\mathcal{A}=(\frac1{\mathcal{M}},\mathcal{M})^{15}$. 
Consider a given initial condition $X_0 \ge 0$ to problem (\ref{eq:system-S+}).
\begin{enumerate} \setcounter{enumi}{3}
\item 
For any $T>0$, 
the unique solution $X(t,A)$, 
as a function of the time variable  $t\in [0,T]$ 
and of the parameters $A \in \overline{\mathcal{A}}$ 
is weakly differentiable,
$$X\in W^{1,\infty}\left((0,T)\times \mathcal{A}\right)^6,$$
and $t \mapsto X(t,\cdot) \in C^0(\overline{\mathcal{A}})^6$ is continuously differentiable in $t$,
$$X\in C^1([0,T],C^0(\overline{\mathcal{A}})^6),$$ 
and,  
$$X\in {\rm Lip}([0,T],W^{1,\infty}(\mathcal{A})^6).$$ 
\\
$\nabla_A X$ is the unique solution to the linear ordinary differential equation obtained by differentiation with respect to $A$ of system \eqref{eq:system-S},
\begin{displaymath}
\frac{\d}{\d t} \nabla_A X(t,A) = \dfrac{\partial F}{\partial  X}
\big(X(t,A),A\big)
\nabla_A X(t,A)
 ~+~ \dfrac{\partial F}{\partial  A}\big(X(t,A),A\big),  
\end{displaymath}
with the initial condition $(\nabla_A X)_0 = 0$.
\end{enumerate}
\end{theorem}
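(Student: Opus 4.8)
The plan is to obtain items~1--3 by an explicit analysis of the (Lipschitz, quasi‑positive) field $F$ --- Cauchy--Lipschitz, a priori bounds, and barriers --- and items~4--5 by Gronwall stability estimates combined with a Lipschitz chain rule whose validity is the crux. First I would record the structure of $F$. With the convention that $(1-\QCMin/Q_C)^+$ and its analogues equal $0$ when the argument of the positive part would be $-\infty$, each elementary factor $N/(K_N+N)$, $C/(K_C+C)$, $(1-\QCMin/Q_C)^+$, $(1-\QNMin/Q_N)^+$, $(1-\alpha Q_N/Q_C)^+$ is bounded by $1$ and Lipschitz in $(X,A)$ on $\{N>-K_N,\ C>-K_C\}\times\overline{\mathcal{A}}$ with constant depending only on $\mathcal{M}$ --- the only delicate point, the corner at $Q_C=\QCMin$, being Lipschitz because on the side $Q_C<\QCMin$ the factor is identically $0$. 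Since minima, sums and products of bounded Lipschitz functions are Lipschitz on bounded sets, $F$ is continuous in $t$ and locally Lipschitz in $X$ uniformly on compacts, so Cauchy--Lipschitz provides a unique maximal solution of $\dot X=F(t,X),\ X(0)=X_0$, which is automatically $C^1$ in $t$. It is non‑negative --- hence a solution in the sense of Definition~\ref{def_solution} --- because $F$ is quasi‑positive: $N=0\Rightarrow\tauN=0\Rightarrow F_1=a\Nin>0$ (symmetrically $F_2$); $Q_N=0$ kills $(1-\QNMin/Q_N)^+$, so $\sigmaN=\tauD=0$ and $F_3=\tauN\ge0$; $Q_C=0$ kills $(1-\QCMin/Q_C)^+$, so $\sigmaN=\sigmaM=\tauD=0$ and $F_4=\tauC\ge0$; and $F_5=0$ on $\{D=0\}$, $F_6=\tauM D\ge0$ on $\{M=0\}$. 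Non‑negativity then follows from the standard invariance criterion for quasi‑positive fields (or from an energy estimate on the negative parts of the components), and uniqueness in the sense of Definition~\ref{def_solution} is inherited from Cauchy--Lipschitz.

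For item~2, I would bound every component on an arbitrary $[0,T]$: from $\tauD\le\mu_D\le\mathcal{M}$ and the non‑negativity of $\tauD,\sigmaN,\sigmaM,\tauM$ one gets $\dot D\le\mathcal{M}D$ (so $D\le D_0 e^{\mathcal{M}t}$), then $\dot N\le\mathcal{M}^2$, $\dot C\le\mathcal{M}^2$, $\dot Q_N\le\mathcal{M}$, $\dot Q_C\le\mathcal{M}$, and finally $\dot M\le\mathcal{M}D$, so the solution stays in a bounded set $\mathcal{K}=\mathcal{K}(T,X_0,\mathcal{M})$ on $[0,T]$, which precludes finite‑time blow‑up and gives global existence. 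For item~3, I would use a barrier argument: at $Q_N=\QNMin$ the factor $(1-\QNMin/Q_N)^+$ vanishes, so $\sigmaN=\tauD=0$ there and $\dot Q_N=\tauN\ge0$ on the barrier. Setting $w=(\QNMin-Q_N)^+$, which is absolutely continuous, one computes $\tfrac{d}{dt}w^2=-2(\QNMin-Q_N)\,\tauN\,\mathbf 1_{\{Q_N<\QNMin\}}\le0$ a.e.; hence $w^2$ is non‑increasing, so $w(t_0)=0$ forces $w=0$ on $[t_0,+\infty)$, i.e. $Q_N\ge\QNMin$ there. The identical argument with $(1-\QCMin/Q_C)^+$ handles $Q_C$.

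For items~4--5, fix $T>0$; by item~2 the set $\mathcal{K}$ and the Lipschitz constant $L$ of $(X,A)\mapsto F(X,A)$ on $\mathcal{K}\times\overline{\mathcal{A}}$ can be taken uniform over $\overline{\mathcal{A}}$. Writing $Y=X(\cdot,A)-X(\cdot,A')$ and splitting $F(X(\cdot,A),A)-F(X(\cdot,A'),A')$ into an $X$‑increment and an $A$‑increment, Gronwall gives $\|Y(t)\|\le T e^{LT}L\,\|A-A'\|$; together with $|\dot X|\le\|F\|_{L^\infty(\mathcal{K}\times\overline{\mathcal{A}})}$, this shows $X$ is Lipschitz on $[0,T]\times\overline{\mathcal{A}}$, hence $X\in W^{1,\infty}((0,T)\times\mathcal{A})^6$. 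Since $\partial_t X(t,A)=F(X(t,A),A)$ and $\|F(X(t,\cdot),\cdot)-F(X(s,\cdot),\cdot)\|_{C^0(\overline{\mathcal{A}})}\le L\|F\|_{L^\infty}|t-s|$, the difference quotients of $t\mapsto X(t,\cdot)$ converge in $C^0(\overline{\mathcal{A}})^6$ to the ($t$‑continuous) map $F(X(t,\cdot),\cdot)$, giving $X\in C^1([0,T],C^0(\overline{\mathcal{A}})^6)$. Finally, differentiating the Duhamel identity $X(t,A)=X_0+\int_0^t F(X(s,A),A)\,ds$ with respect to $A$ yields
\[
\nabla_A X(t,A)=\int_0^t\Big(\tfrac{\partial F}{\partial X}(X(s,A),A)\,\nabla_A X(s,A)+\tfrac{\partial F}{\partial A}(X(s,A),A)\Big)\,ds ,
\]
i.e. the stated affine ODE with $L^\infty$ coefficients and zero initial datum; for a.e. $A$ this has a unique solution by Gronwall, so $\nabla_A X(\cdot,A)$ equals it, is Lipschitz in $t$ with constant uniform in $A$, and therefore $X\in\mathrm{Lip}([0,T],W^{1,\infty}(\mathcal{A})^6)$.

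The main obstacle is the legitimacy of the last step, namely the chain rule $\nabla_A\big[F(X(t,A),A)\big]=\tfrac{\partial F}{\partial X}\nabla_A X+\tfrac{\partial F}{\partial A}$ for the merely Lipschitz $F$: it holds a.e. provided the trajectory meets the non‑differentiability locus $N_F$ of $F$ --- the union of the corner surfaces $\{Q_C=\QCMin\}$, $\{Q_N=\QNMin\}$, $\{Q_C=\alpha Q_N\}$ and of the switching surfaces of the two minima --- on a Lebesgue‑null subset of $(0,T)\times\mathcal{A}$, equivalently, by Fubini, on a null set of times for a.e. $A$. I would obtain this from the same cancellations used for item~3: on $\{Q_C=\QCMin\}$ one has $\dot Q_C=\tauC$, so if $\{t:Q_C(t,A)=\QCMin\}$ had positive measure then $\tauC=0$, i.e. $C=0$, would hold on a positive‑measure set, whence on its intersection with $\{C=0\}$ --- a level set of the absolutely continuous $C$ --- one would get $\dot C=0=a\Cin>0$, a contradiction; the surfaces $\{Q_N=\QNMin\}$ and $\{Q_C=\alpha Q_N\}$ and the two $\min$‑switchings are treated likewise, using that on each of them the relevant nonlinear terms drop out, so that the component of the drift transversal to the surface reduces to a strictly‑signed quantity or a non‑vanishing combination of the uptakes. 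Carrying out this case analysis --- and thereby justifying the variational equation and the uniqueness claim --- is the technical heart of the theorem's last part.
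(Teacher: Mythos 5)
Your treatment of items 1--3 coincides in substance with the paper's proof: you use the positive-part trick $\dot X=F(t,X^+)$ plus quasi-positivity to get non-negativity (the paper's Lemma~\ref{positivite}), a priori bounds plus Gronwall for global existence (Lemma~\ref{globale}), and a negative-part barrier for the invariance of $\{Q_N\ge\QNMin\}$ and $\{Q_C\ge\QCMin\}$ (Lemma~\ref{seuil}). For item 4, the Lipschitz-in-$(t,A)$ estimate and the $C^1([0,T],C^0(\overline{\mathcal{A}})^6)$ conclusion are the same as in Lemmas~\ref{lem:X-weak-differentiable}--\ref{lem:stablilite}.

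The genuine divergence is in the justification of the variational equation, and this is where your argument has a gap. You attempt to show that the trajectory $(t,A)\mapsto\big(X(t,A),A\big)$ meets the non-differentiability locus of $F$ on a Lebesgue-null set, then apply the classical chain rule on the complement. Your argument works for the surfaces $\{Q_C=\QCMin\}$ and $\{Q_N=\QNMin\}$, because on those surfaces the terms with $(1-\QCMin/Q_C)^+$ or $(1-\QNMin/Q_N)^+$ vanish, reducing $\dot Q_C$ (resp.~$\dot Q_N$) to a strictly positive uptake, and a Stampacchia-type level-set contradiction closes the argument. But it does \emph{not} go through ``likewise'' on the $\min$-switching surfaces. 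On the surface where $(1-\QCMin/Q_C)^+=(1-\QNMin/Q_N)^+$ (with both positive), or on $\{\QCMin=\alpha Q_N\}$, none of the nonlinear factors vanish, so no term drops out of the drift; the best you can extract from ``the derivative of a $C^1$ function vanishes a.e.~on a level set of positive measure'' is a nondegenerate algebraic relation among $(C,N,Q_C,Q_N)$, which is not an immediate contradiction. Similarly on $\{Q_C=\alpha Q_N\}$ one only gets $\tau_C=\alpha\tau_N$ on the set, which does not clash with the dynamics without a further and potentially unbounded iteration. So the ``technical heart'' you flag is not actually resolved by the case analysis you sketch.

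The paper takes a different and more robust route: rather than ruling out positive-measure contact with the corner set, it invokes Murat and Trombetti's chain rule for the composition of a $W^{1,p}$ map with a globally Lipschitz, \emph{piecewise $C^1$} function (\cite[Thm.~2.1]{Murat}). That theorem gives $\nabla_{(t,A)}F_k(X,\cdot)=\sum_i \un_{P^i}(X)(DF^i_k)(X)\nabla_{(t,A)}X$ a.e.~in $\mathcal{Q}$ regardless of how long the trajectory sits on the interfaces: the formula is well defined through the $C^1$ extensions $\widetilde F^i_k$, and possible discrepancies across an interface are absorbed because $\nabla_{(t,A)} X$ vanishes a.e.~on each level set it would otherwise see. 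This bypasses entirely the null-measure claim you would need. (The paper also handles the non-separability of $L^\infty(\mathcal{A})$ with an $L^p\to L^\infty$ limiting argument that you do not reproduce, but that is a minor technical point.) If you want to keep your approach, you would need either to strengthen the case analysis into a genuine argument for the $\min$-switching surfaces --- which is doubtful --- or to switch to the Lipschitz/piecewise-$C^1$ chain rule as the paper does.
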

The proof of Theorem \ref{theo:main-result} has been split
into lemma \ref{lem:ineq-F}, \ref{positivite} \& \ref{globale} for points 1 \& 2  and
lemma \ref{seuil} for point 3. Then, point 4 is detailed in lemmas 
\ref{lem:X-weak-differentiable}, \ref{lem:stablilite} and \ref{lem:differentiabilite}. 
\\[0.1cm]
Finally, in the last section \ref{Numerical_simulations}, 
the differentiability properties enunciated in  theorem 
\ref{theo:main-result}
will be used to propose a method of parameter identification for model  (\ref{eq:system-S+}) based on a Gauss Newton method: 
\begin{itemize}
\item a realistic choice of parameters is discussed, illustrated by  direct simulations showing the model dynamics,
\item then, a numerical methodology for identifying the problem parameters is presented, supported by numerical output statistics.
\end{itemize}
\subsection{Well-posedness}
To address the question of the positivity of the solutions of 
system \eqref{eq:system-S}, we study the Cauchy problem,
\begin{equation}
  \label{eq:system-S+}
  \diff{X}{t}=F(t, X^+)\,, \quad X(0)=X_0.
\end{equation}
It will be proven in this section that problem \eqref{eq:system-S+} is well posed with global solutions for all $t\ge 0$.
Moreover its solutions will satisfy $X(t)\ge 0$ if $X_0\ge 0$ and so will coincide with system \eqref{eq:system-S} solutions.
\\[0.2cm] \indent
In the sequel, we generically denote $C>0$ a positive constant only depending on the parameter $\mathcal{M}$ in assumptions (\ref{bornes})~(\ref{bornes2}), whose value however may differ from one situation to another.
\\[0.1cm]
Let us note that for positive $\QCMin$ and $\QNMin$, and nonnegative $Q_C$ and $Q_N$,  
{\footnotesize \begin{align*}
\min\left ((1-\dfrac{\QCMin}{Q_C})^+,~
(1-\dfrac{\QNMin}{Q_N})^+\right ) 
=
\min\left (1-\dfrac{\QCMin}{\max(\QCMin,Q_C)},~
1-\dfrac{\QNMin}{\max(\QNMin,Q_N)}\right ),  
\end{align*}}
\noindent and, noticing that $(1-\dfrac{\QCMin}{Q_C})^+=0$ for any $0 \leq Q_C\leq  \QCMin$, that
{\footnotesize \begin{align*}
\min\left ((1-\dfrac{\QCMin}{Q_C})^+,~
(1-\dfrac{\alpha Q_N}{Q_C})^+\right ) 
=
\min\left (1-\dfrac{\QCMin}{\max(\QCMin,Q_C)},~
1-\dfrac{\alpha Q_N}{\max(\alpha Q_N,Q_C)}\right ),  
\end{align*}}
\noindent with the convention that $\dfrac{\alpha Q_N}{\max(\alpha Q_N,Q_C)}=0$ if  $Q_N= Q_C= 0$. 
Then, the following function $\A$, defined \textit{a priori} for any  $(a,b,c,d) \in \R^4$ by 
\begin{equation}
  \label{def:FonctionA}
  \A(a,b,c,d) = 
      \min
      \Big(   
      1-\dfrac{a^+}{\max(a^+,b^+)},
      1-\dfrac{c^+}{\max(c^+,d^+)}
      \Big)      
~~\text{with convention }~~\frac00=0,
\end{equation}
is introduced for the study of problem \eqref{eq:system-S+}, it satisfies the following lemma.
\begin{lemma}
  \label{lem:FonctionA}
  \begin{itemize}
\item 
For any $(a,b,c,d) \in \R^4$, $0 \le \A(a,b,c,d) \le 1$.  
  \item Let $\delta>0$, $(a_i,b_i,c_i,d_i) \in \R^4$ with $a_i,c_i\geq \delta$, $i=1,2$, then, 
  \begin{align*}
\Big|\A(a_1,b_1,c_1,d_1)-\A(a_2,b_2,c_2,d_2)\Big| 
\leq 2\frac{|a_1-a_2|+|c_1-c_2|}{\delta}+\frac{|b_1-b_2|+|d_1-d_2|}{\delta}.
\end{align*}
 
  \item
 Let $\delta>0$, $(a_i,b_i,c_i) \in \R^4$ with $a_i\geq \delta$, $i=1,2$, then, 
  \begin{align*}
\Big|\A(a_1,b_1,c_1,b_1)-\A(a_2,b_2,c_2,b_2)\Big| 
\leq 2\frac{|a_1-a_2|+|c_1-c_2|+|b_1-b_2|}{\delta}.
\end{align*} 
  \end{itemize}
\end{lemma}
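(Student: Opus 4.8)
The plan is to dispatch the three items in order, the first being immediate, the second resting on a single scalar estimate, and the third reducing to that same estimate after a structural simplification.

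For the first item I would simply observe that, with the convention $0/0=0$, one has $0\le \dfrac{x^+}{\max(x^+,y^+)}\le 1$ for all reals $x,y$ (the numerator is nonnegative and never exceeds the denominator), so each of the two arguments of the $\min$ in \eqref{def:FonctionA} lies in $[0,1]$, hence so does their minimum. For the second item, the first step is the elementary bound $|\min(x_1,y_1)-\min(x_2,y_2)|\le\max(|x_1-x_2|,|y_1-y_2|)$, which reduces the claim to controlling $|g(a_1,b_1)-g(a_2,b_2)|$ (and the analogous $(c,d)$ increment), where $g(a,b)=1-\dfrac{a^+}{\max(a^+,b^+)}$ and $a_1,a_2\ge\delta$. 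Since $a_i\ge\delta>0$ we have $a_i^+=a_i$ and $m_i:=\max(a_i,b_i^+)\ge\delta$, so no $0/0$ arises, and I would write
\[
g(a_1,b_1)-g(a_2,b_2)=\frac{a_2}{m_2}-\frac{a_1}{m_1}=\frac{a_2}{m_2}\cdot\frac{m_1-m_2}{m_1}+\frac{a_2-a_1}{m_1}.
\]
The first term is bounded by $|m_1-m_2|/\delta$ using $a_2/m_2\le 1$ and $m_1\ge\delta$, combined with $|m_1-m_2|\le|a_1-a_2|+|b_1-b_2|$ (the $1$-Lipschitz continuity of $x\mapsto x^+$ and the joint $1$-Lipschitz continuity of $\max$); the second term is bounded by $|a_1-a_2|/\delta$. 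Summing gives $|g(a_1,b_1)-g(a_2,b_2)|\le\bigl(2|a_1-a_2|+|b_1-b_2|\bigr)/\delta$, and adding the two pairs yields the stated inequality.

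The third item is the delicate one, and here one cannot argue coordinate by coordinate: when $d_i=b_i$ the second argument of the $\min$ is $1-c_i^+/\max(c_i^+,b_i^+)$, which is \emph{not} Lipschitz in $(c,b)$ near the origin (it equals $1$ at $(0,0)$ but $0$ at $(\varepsilon,0)$), and \emph{no} lower bound on the $c_i$ is assumed. The key observation I would exploit is that the $\min$ with the first argument — which carries $a_i\ge\delta$ \emph{and the same} $b_i$ — collapses the whole expression: using $a_i^+=a_i$ and a short case check on the ordering of $a$, $c^+$, $b^+$, one obtains the identity
\[
\A(a,b,c,b)=1-\frac{\max(a,c^+)}{\max\bigl(a,c^+,b^+\bigr)}.
\]
This is again of the form $1-u/\max(u,v)$ with $u=\max(a_i,c_i^+)$ and $v=b_i^+$, and crucially $u\ge a_i\ge\delta$ even though $c_i$ is unconstrained; thus the scalar estimate of the second item applies verbatim with $u$ in place of $a$, and using $|u_1-u_2|=|\max(a_1,c_1^+)-\max(a_2,c_2^+)|\le|a_1-a_2|+|c_1-c_2|$ gives
\[
\bigl|\A(a_1,b_1,c_1,b_1)-\A(a_2,b_2,c_2,b_2)\bigr|\le\frac{2\bigl(|a_1-a_2|+|c_1-c_2|\bigr)+|b_1-b_2|}{\delta}\le\frac{2\bigl(|a_1-a_2|+|c_1-c_2|+|b_1-b_2|\bigr)}{\delta}.
\]

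I expect the main obstacle to be exactly the non-Lipschitz behaviour just flagged: everything hinges on recognising that under $d=b$ the apparent singularity of the second term is killed by the $\min$ with the well-behaved first term, after which the problem collapses to the single scalar Lipschitz estimate used for item~2. Verifying the collapsing identity is a routine but slightly tedious distinction of cases according to which of $a$, $c^+$, $b^+$ is largest.
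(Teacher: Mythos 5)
Your proofs of the first two items are essentially the paper's: the first is immediate from the definition, and for the second both you and the paper reduce to the same scalar estimate
$|a_1/m_1-a_2/m_2|\le (2|a_1-a_2|+|b_1-b_2|)/\delta$ (you use $\max$ rather than the sum in the Lipschitz bound for $\min$, which is a harmless strengthening). All of that is correct.

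Your treatment of the third item, however, is genuinely different from the paper's and is the more elegant of the two. The paper proceeds by case analysis on the ordering of $b_i$ and $a_i$: when $b_i>a_i$ for both $i$ it observes $\max(c_i,b_i)\ge b_i>\delta$ and reruns the proof of item 2 with $b_i$ playing the role of the lower bound; when $b_i\le a_i$ for both $i$ it notes that $\A(a_i,b_i,c_i,b_i)=0$; and in the two mixed cases it bounds the single nonzero value $\A(a_2,b_2,c_2,b_2)\le 1-a_2/b_2$ directly via the telescoping $b_2-a_2=(b_2-b_1)+(b_1-a_1)+(a_1-a_2)$ and $b_1-a_1\le 0$. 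You instead prove the collapsing identity
\[
\A(a,b,c,b)=1-\frac{\max(a,c^+)}{\max(a,c^+,b^+)},
\]
valid for $a>0$, which eliminates the $\min$ entirely and puts the expression back in the form $1-u/\max(u,v)$ with $u=\max(a,c^+)\ge a\ge\delta$. This reduces item 3 to a single application of the scalar estimate from item 2 together with the $1$-Lipschitz continuity of $\max$ and $(\cdot)^+$, with no case splitting at the level of the Lipschitz argument (the cases are absorbed into the verification of the identity, which is straightforward). Your route also yields the slightly sharper intermediate bound $\bigl(2(|a_1-a_2|+|c_1-c_2|)+|b_1-b_2|\bigr)/\delta$ before relaxing to the stated form. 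I checked the identity in all orderings of $a$, $c^+$, $b^+$, including the degenerate case $c^+=b^+=0$ where the convention $0/0=0$ is used, and it holds; the rest of the argument is sound.
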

\begin{proof}
Consider $(a_i,b_i,c_i,d_i) \in [\delta,+\infty[\times \R^+\times [\delta,+\infty[\times \R^+$ for a given positive $\delta$, $i=1$ and $2$. 
Using the following inequality on the $\min$ and $\max$ functions,
$\vert \min(x,y) - \min(a,b)   \vert \le \vert x-a\vert  + \vert y-b\vert$ 
and 
$\vert \max(x,y) - \max(a,b)   \vert \le \vert x-a\vert  + \vert y-b\vert $,   one has 
\begin{align*}
&\Big|\A(a_1,b_1,c_1,d_1)-\A(a_2,b_2,c_2,d_2)\Big| 
\\=& \Big|\min\Big(1-\dfrac{a_1}{\max(a_1,b_1)},1-\dfrac{c_1}{\max(c_1,d_1)}\Big)-\min\Big(1-\dfrac{a_2}{\max(a_2,b_2)},1-\dfrac{c_2}{\max(c_2,d_2)}\Big)\Big|
\\ \leq &
\Big|\dfrac{a_1}{\max(a_1,b_1)}-\dfrac{a_2}{\max(a_2,b_2)}\Big|+ \Big|\dfrac{c_1}{\max(c_1,d_1)}-\dfrac{c_2}{\max(c_2,d_2)}\Big|
\\ \leq &
\Big|\dfrac{(a_1-a_2)\max(a_2,b_2) + a_2[\max(a_2,b_2)- \max(a_1,b_1)]}{\max(a_1,b_1)\max(a_2,b_2)}\Big|+ \Big|\dfrac{c_1}{\max(c_1,d_1)}-\dfrac{c_2}{\max(c_2,d_2)}\Big|
\\ \leq &
\dfrac{|a_1-a_2|}{\max(a_1,b_1)}+
\dfrac{|\max(a_2,b_2)- \max(a_1,b_1)|}{\max(a_1,b_1)}+ 
\dfrac{|c_1-c_2|}{\max(c_1,d_1)}+
\dfrac{|\max(c_2,d_2)- \max(c_1,d_1)|}{\max(c_1,d_1)}
\\ \leq &\frac2\delta \Big[|a_1-a_2|+|c_1-c_2|\Big]+\frac1\delta \Big[|b_1-b_2|+|d_1-d_2|\Big].
\end{align*}
In case of signed variables, the inequality holds with $a^+_i,b^+_i,c^+_i,d^+_i$ (i=1,2) on the right hand side and the result holds since $|x^+-y^+| \leq |x-y|$ for any real $x,y$. 
\\[0.1cm]
Now consider the second inequality, first for non negative variables.
It is here assumed that $a_i\ge \delta$.
In the particular case where $b_i > a_i$, one has $b_1,b_2\ge \delta$ and the second inequality is induced by the first one.
Now if $b_i \le a_i$ then $ \A(a_i,b_i,c_i,b_i) = 0 $,  we therefore only have 
to investigate the case $b_1 \leq a_1$ and $b_2 > a_2$ (the fourth case being similar), 
\begin{align*}
\Big|\A(a_1,b_1,c_1,b_1)-\A(a_2,b_2,c_2,b_2)\Big| =
\A(a_2,b_2,c_2,b_2) \leq 1 - \dfrac{a_2}{b_2},
\end{align*}
and, since $b_1-a_1 \le 0$,
\begin{align*}
1 - \dfrac{a_2}{b_2} = 
\dfrac{b_2-b_1 + (b_1-a_1) + a_1-a_2 }{b_2}
\leq 
\dfrac{|b_2-b_1| + |a_1-a_2|}{\delta}.
\end{align*}

The case of signed variables is similar as above.
\end{proof}
The well-posedness of the Cauchy problem \eqref{eq:system-S+} is stated in the following lemma.
\begin{lemma}
  \label{lem:ineq-F}
  With assumptions \eqref{bornes} and \eqref{bornes2}, 
  the function
  $(t,X)\mapsto F(t, X^+)$ is continuous  and locally Lipschitz
  with respect to its second variable $X$ on $[0,+\infty)\times  \R^6$. 

  As a consequence, Picard–Lindel\"of theorem yields 
a result of existence and uniqueness of the maximal solution 
  to the Cauchy problem \eqref{eq:system-S+},
  for any initial condition $X_0\in \R^6$.
\end{lemma}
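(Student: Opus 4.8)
The plan is to verify the two hypotheses of the Picard--Lindel\"of (Cauchy--Lipschitz) theorem directly on the map $G(t,X):=F(t,X^+)$, by decomposing it into elementary blocks. The \emph{linear} contributions $a(\Nin(t)-N^+)$, $a(\Cin(t)-C^+)$, $-(a+m_D(t))D^+$ and $-aM^+$ are manifestly jointly continuous (their coefficients being continuous on $[0,+\infty)$ by \eqref{bornes}--\eqref{bornes2}) and globally Lipschitz in $X$, since $x\mapsto x^+$ is $1$-Lipschitz. For the uptake terms I would note that $g_K(x)=x/(K+x)$ satisfies $0\le g_K\le 1$ on $[0,+\infty)$ and is $(1/K)$-Lipschitz there; hence by \eqref{bornes2} the functions $\tauN(\cdot,(\cdot)^+)$ and $\tauC(\cdot,(\cdot)^+)$ are bounded by $\mathcal M$, globally Lipschitz in the space variable with constant $\le\mathcal M^2$, and jointly continuous because their numerators and denominators are continuous and the denominators are bounded below by $1/\mathcal M$.

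For the Droop--Liebig quantities $\tauD,\tauM,\sigmaN,\sigmaM$ I would use the reformulations through $\A$ recorded just before \eqref{def:FonctionA}, which give, as functions of $X\in\R^6$, $\tauD(t,Q_C^+,Q_N^+)=\mu_D(t)\,\A(\QCMin,Q_C,\QNMin,Q_N)$, $\sigmaN(t,Q_C^+,Q_N^+)=\Theta_D(t)\,\A(\QCMin,Q_C,\QNMin,Q_N)$, and likewise $\tauM,\sigmaM$ with $\A(\QCMin,Q_C,\alpha Q_N,Q_C)$ (the positive parts built into $\A$ absorbing the outer $(\cdot)^+$, and the upper bound $\alpha\le\mathcal M$ controlling the slot $\alpha Q_N$). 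By the first item of Lemma~\ref{lem:FonctionA}, $0\le\A\le1$, so each of these four functions is bounded by $\mathcal M$. By the second and third items of Lemma~\ref{lem:FonctionA}, applied with $\delta=1/\mathcal M$ --- legitimate because the first slot of $\A$ is $\QCMin\ge 1/\mathcal M$ and, for $\tauD,\sigmaN$, the third slot is $\QNMin\ge 1/\mathcal M$, by \eqref{bornes} --- together with the signed-variable case treated at the end of that proof, the $\A$-compositions are globally Lipschitz in $(Q_C,Q_N)\in\R^2$ with a constant $\le C$; they are jointly continuous since $\mu_D,\mu_M,\Theta_D,\Theta_M$ are continuous.

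The only real (and mild) obstacle is that several components of $G$ are products of one of the bounded Lipschitz factors above by a coordinate that is not globally bounded, namely $\tauN(t,N^+)D^+$, $\tauC(t,C^+)D^+$, $\tauD\,Q_N^+$, $\tauD\,Q_C^+$, $\tauD\,D^+$ and $\tauM\,D^+$. Here I would invoke the elementary estimate that, on a ball $\{\|X\|\le R\}$, a product of two Lipschitz functions one of which is bounded by $\mathcal M$ and the other by $R$ is again Lipschitz, with constant controlled by $\mathcal M$, $R$ and the two Lipschitz constants. This yields local --- not global --- Lipschitz continuity of $G$ in $X$, uniform for $t$ in compact intervals, and joint continuity of $G$ on $[0,+\infty)\times\R^6$ (a finite sum of products of functions continuous in $t$ alone and continuous in $X$ alone, plus the jointly continuous terms $\tauN,\tauC$). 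Gathering everything, $G(t,X)=F(t,X^+)$ is continuous and locally Lipschitz in its second argument, so the Picard--Lindel\"of theorem delivers, for every $X_0\in\R^6$, a unique maximal solution of \eqref{eq:system-S+}, which is the assertion of the lemma. I do not expect any step beyond these to require more than routine bookkeeping.
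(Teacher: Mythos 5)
Your proposal is correct and follows essentially the same route as the paper's proof: both rest on the reformulation of the Droop--Liebig rates through the function $\A$ introduced in \eqref{def:FonctionA}, both invoke the Lipschitz estimates of Lemma~\ref{lem:FonctionA} with $\delta=1/\mathcal M$, both use boundedness and global Lipschitz continuity of the Michaelis--Menten factors, and both then handle the products with $D^+$, $Q_N^+$, $Q_C^+$ by a local (rather than global) Lipschitz argument on balls. The only cosmetic difference is that you organize the argument by term type (linear, uptake, Droop--Liebig, products), whereas the paper proceeds component by component $F_1^+,\dots,F_6^+$, but the underlying estimates are the same.
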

\begin{proof}
  It will be denoted here $F_i^+:(t,X)\mapsto F_i(t, X^+)$.
  \begin{align*}
    F_1(t,X_1^+)-F_1(t,X_2^+)&=  a N_2^+- a N_1^+ 
                               + \tau_N(t,N_2^+)D_2^+ - \tau_N(t,N_1^+)D_1^+    
    \\&=
    a(N_2^+- N_1^+ ) + D_2^+ (\tau_N(t,N_2^+) - \tau_N(t,N_1^+))
    \\& ~~+ \tau_N(t,N_1^+)(D_2^+ - D_1^+).
  \end{align*}  
  Thanks to assumptions (\ref{bornes}), $\tau_N(t,N^+)$ in (\ref{tau_N}) is bounded and globally Lipschitz in $N$ on 
  $[0,+\infty)\times [0,+\infty)$ 
  with a constant only depending on $\mathcal{M}$. Then,
  \begin{align*}
    \left \vert  
    F_1(t,X_1^+)-F_1(t,X_2^+)
    \right  \vert  \le
    C(1 + |D_2|) \left (
    |N_1-  N_2| + |D_1-  D_2|
    \right ),
  \end{align*}  
  and $F_1^+$ is continuous and locally Lipschitz in $X$ on $[0,+\infty)\times \R^6$, as well as $F_2^+$ and $F_5^+$ with a similar proof.
  \\[0.4cm]\indent
  Let us now consider  $F_3^+:(t,X)\mapsto F_3(t, X^+)$, we recall that
  $
  F_3(t,X^+) = \tau_N(t,N^+) - \sigma_N(t,Q_C^+,Q_N^+)
  - \tau_D(t,Q_C^+,Q_N^+)Q_N^+ $.
  We just stated that the first term is globally Lipschitz in $N$.
  With definition  \eqref{def:FonctionA} of  $\A$ we get,
  \begin{align*}
    \sigma_N(t,Q_C^+,Q_N^+) &= 
                              \Theta_D(t)\A(\QCMin, Q_C,\QNMin, Q_N),
    \\ 
    \tau_D(t,Q_C^+,Q_N^+)   &= 
                              \mu_D(t)\A(\QCMin, Q_C,\QNMin, Q_N).
  \end{align*}
  With assumption (\ref{bornes2}), $\Theta_D$ and $\mu_D$ are bounded
  and with lemma \ref{lem:FonctionA}:
  $\A(\QCMin, Q_{C},\QNMin, Q_{N}) \le 1$ and,
  \begin{align*}
    \left \vert 
    \A(\QCMin, Q_{C,1},\QNMin, Q_{N,1})
    -
    \A(\QCMin, Q_{C,2},\QNMin, Q_{N,2})
    \right \vert 
    \\
    \le
    C  
    \left (|Q_{C,1}-Q_{C,2}|+|Q_{N,1}-Q_{N,2}|\right ).
  \end{align*}
  It follows that,
  \begin{align*}
    \left \vert  
    \sigma_N(t,Q_{C,1},Q_{N,1}) - 
    \sigma_N(t,(Q_{C,2},Q_{N,2}) 
    \right\vert  
    \leq C  
    \left (|Q_{C,1}-Q_{C,2}|+|Q_{N,1}-Q_{N,2}|\right ),
  \end{align*}
  and,
  \begin{align*}
    &\left \vert 
      \tau_D(t,Q_{C,1}^+,Q_{N,1}^+)Q^+_{N,1} - 
      \tau_D(t,Q_{C,2}^+,Q_{N_2}^+)Q^+_{N,2}
      \right \vert 
    \\
    &~~~\le
      \vert \tau_D(t,Q_{C,1}^+,Q_{N,1}^+) \vert  \, 
      \vert Q^+_{N,1} - Q^+_{N,2}\vert 
      + 
      \left \vert \tau_D(t,Q_{C,1}^+,Q_{N,1}^+) - \tau_D(t,Q_{C,2}^+,Q_{N_2}^+)
      \right \vert \,\vert Q^+_{N,1}\vert  
    \\
    &~~~\le
      C
      \left \vert Q_{N,1} - Q_{N,2} \right \vert 
      +
      C  
      \left (|Q_{C,1}-Q_{C,2}|+|Q_{N,1}-Q_{N,2}|\right )
      \vert Q_{N,1}\vert.  
  \end{align*}
  Altogether, this prove that $F_3^+$ is continuous and locally Lipschitz in $X$ on $[0,+\infty)\times \R^6$.
  \\[0.4cm]\indent
  We now consider  $F_4^+$,
  we recall that
  $
  F_4^+(t,X) = g_4(t,X) - \sigma_M(t,Q_C^+,Q_N^+) $
\\  with \ 
$g_4(t,X^+) := \tau_C(t,C^+) - \alpha \sigma_N(t,Q_C^+,Q_N^+)
  - \tau_D(t,Q_C^+,Q_N^+)Q_C^+$. 
\\
  From what has been previously developed , $g_4$ is locally Lipschitz in $X$, 
  and it remains to study the term,
  \begin{align*}
    \sigma_M(t,Q_C^+,Q_N^+) = \Theta_M(t)
    \A(\QCMin, Q_C, \alpha Q_N^+, Q_C).
  \end{align*}
As $\Theta_M$ is bounded with assumption (\ref{bornes2}), this latter is Lipschitz continuous in $X$ thanks to the last point of  lemma \ref{lem:FonctionA}.
So, $F_4^+$ is continuous and locally Lipschitz in $X$ on $[0,+\infty)\times \R^6$, as well as $F_6^+$ with a similar proof.
\end{proof}
Let us now prove positivity properties for the solutions of problem (\ref{eq:system-S+}): it implies the equivalence between formulations \eqref{eq:system-S} and (\ref{eq:system-S+}) for non-negative initial conditions $X_0$.
\begin{lemma}\label{positivite}
  With assumptions \eqref{bornes} and \eqref{bornes2}, 
  the unique maximal solution $X$ to problem (\ref{eq:system-S+}) 
  for an initial condition $X_0 \ge 0$ 
  remains non-negative: $X(t) \ge 0$ as long as it exists.
  \\
  This is the unique solution of system \eqref{eq:system-S} in the sense of definition \ref{def_solution}.
\end{lemma}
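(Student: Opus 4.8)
The plan is to exploit the fact that the right-hand side of (\ref{eq:system-S+}) depends on $X$ only through its positive part $X^+$, so that at every face of the non-negative orthant the vector field points (weakly) inward. The crucial step, which I would carry out componentwise, is the inward-pointing estimate: for every $i\in\{1,\dots,6\}$, every $t\ge 0$ and every $Y\in\R^6$ with $Y_i\le 0$, one has $F_i(t,Y^+)\ge 0$. For $i=1,2$ this is immediate, since $N\le 0$ (resp. $C\le 0$) gives $N^+=0$ and $\tau_N(t,0)=0$, so $F_1(t,Y^+)=a\Nin(t)>0$ (resp. $F_2(t,Y^+)=a\Cin(t)>0$) by \eqref{bornes2}. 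For $i=5$, if $D\le 0$ then every term of $F_5(t,Y^+)$ carries the factor $D^+=0$, hence $F_5(t,Y^+)=0$; for $i=6$, if $M\le 0$ then $M^+=0$ and $F_6(t,Y^+)=\tau_M(t,Q_C^+,Q_N^+)D^+\ge 0$ since $\tau_M\ge 0$ and $D^+\ge 0$. The two remaining cases use the representation of $\sigma_N$, $\sigma_M$ and $\tau_D$ through the function $\A$ of \eqref{def:FonctionA}. If $Q_N\le 0$, then $Q_N^+=0$ cancels the term $\tau_D Q_N$ in $F_3$, and in $\A(\QCMin,Q_C,\QNMin,Q_N)$ the second entry of the minimum equals $1-\QNMin/\max(\QNMin,0)=0$, so that $\A=0$ by the first point of Lemma \ref{lem:FonctionA}, whence $\sigma_N=0$ and $F_3(t,Y^+)=\tau_N(t,N^+)\ge 0$. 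Symmetrically, if $Q_C\le 0$, then $Q_C^+=0$ cancels the term $\tau_D Q_C$ in $F_4$ while the first entry of $\A$ vanishes in both $\sigma_N$ and $\sigma_M$, so $\sigma_N=\sigma_M=0$ and $F_4(t,Y^+)=\tau_C(t,C^+)\ge 0$.

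Granting this inward-pointing estimate, positivity of the maximal solution $X$, defined on some interval $[0,T_{\max})$ by Lemma \ref{lem:ineq-F}, follows by a last-exit-time argument. Suppose some component became negative: choose an index $i$ and a time $t_1\in(0,T_{\max})$ with $X_i(t_1)<0$, and set $t_0=\sup\{t\in[0,t_1]:X_i(t)\ge 0\}$. The set $\{t:X_i(t)\ge 0\}$ is closed and contains $0$, so $X_i(t_0)\ge 0$; moreover $X_i<0$ on $(t_0,t_1]$, which by continuity forces $X_i(t_0)=0$ and $t_0<t_1$. On $[t_0,t_1]$ one has $X_i\le 0$, hence $\diff{X_i}{t}=F_i(t,X(t)^+)\ge 0$ there by the estimate above, so $X_i$ is non-decreasing on $[t_0,t_1]$ and $X_i(t_1)\ge X_i(t_0)=0$, contradicting $X_i(t_1)<0$. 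Hence $X(t)\ge 0$ for all $t\in[0,T_{\max})$.

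It remains to identify $X$ with the solution of \eqref{eq:system-S}. Since $X=X^+$, $X$ solves $\diff{X}{t}=F(t,X)$ and takes non-negative values, so it is a solution of \eqref{eq:system-S} in the sense of Definition \ref{def_solution}. Conversely, any solution $Y$ of \eqref{eq:system-S} in that sense is non-negative by definition, hence $Y=Y^+$ and $Y$ solves (\ref{eq:system-S+}) with the same initial datum $X_0$; by the uniqueness part of Lemma \ref{lem:ineq-F}, $Y$ is then a restriction of $X$. Thus $X$ is the unique maximal solution of \eqref{eq:system-S}.

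I expect the only genuine difficulty to be the bookkeeping in the componentwise check of the first paragraph: identifying correctly which arguments are fed to $\A$ inside $\sigma_N$, $\sigma_M$ and $\tau_D$, handling the convention $0/0=0$, and invoking the first point of Lemma \ref{lem:FonctionA} to upgrade the bound $\A\le 0$ into the equality $\A=0$. Everything else is routine continuity and monotonicity.
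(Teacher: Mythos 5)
Your proof is correct, and it rests on exactly the same key observation as the paper's, namely the inward-pointing estimate $F_i(t,Y^+)\ge 0$ whenever $Y_i\le 0$, which you establish carefully componentwise (including the correct reading of $\A$ inside $\sigma_N$, $\sigma_M$, $\tau_D$). Where you diverge is in how that estimate is converted into invariance of the orthant: the paper applies the chain rule in Sobolev spaces to the negative part $X_i^-$, obtaining $\frac{\d}{\d t}X_i^- = -F_i(\cdot,X^+)\,\un_{\{X_i\le 0\}}\le 0$ a.e., hence $X_i^-\equiv 0$; you instead run an elementary last-exit-time argument, picking the last time $t_0$ at which $X_i$ is nonnegative and using monotonicity of $X_i$ on $[t_0,t_1]$ to reach a contradiction. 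Your route is slightly more pedestrian but avoids invoking the Sobolev chain rule, and it is admissible here because the solution of \eqref{eq:system-S+} is $C^1$ (the vector field in Lemma~\ref{lem:ineq-F} is continuous), so the classical sign of the derivative does force monotonicity. The paper's formulation with $X_i^-$ is more robust (it works for merely absolutely continuous solutions and is the same device reused in Lemma~\ref{seuil}), which is presumably why the authors chose it. You also make explicit the identification of the solution of \eqref{eq:system-S+} with the (unique) solution of \eqref{eq:system-S} in the sense of Definition~\ref{def_solution}, a point the paper's proof leaves implicit; that addition is welcome.
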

\begin{proof}
  Consider a solution $X$ to problem (\ref{eq:system-S+}) for $X_0\ge 0$. 
  Let us prove that for any of its coordinate $i$,  $X^-_i(t) = 0$ where $X^-_i=\max(0,-X_i)$ is the negative part of $X_i$. 
  Thanks to the chain rule in Sobolev spaces 
  $\dfrac{\d X_i^-}{ \d t} = 
  -\dfrac{\d X_i}{ \d t}\; 1_{\{X_i\leq 0\}} 
  = - F_i(\cdot, X^+) \; 1_{\{X_i\leq 0\}}$ for a.e. time $t$.
  Now the key argument is to notice in system (\ref{eq:system-S}) that
  $ F_i(\cdot, X^+) \ge 0$ if $X_i \le 0$.
  It follows that $\frac{\d X_i^-}{\d t} \le 0$, 
  $X_i(0)^-=0$ by assumption and so $X_i(t)^-=0$.
\end{proof}
We now investigate global existence for solutions to problem (\ref{eq:system-S+}).
\begin{lemma}\label{globale}
  With assumptions \eqref{bornes} and \eqref{bornes2}, 
the maximal solution to problem (\ref{eq:system-S+}) 
is a global solution.
\end{lemma}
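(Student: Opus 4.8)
The plan is to invoke the standard blow-up alternative attached to the maximal solution furnished by Lemma \ref{lem:ineq-F}: if the maximal interval of existence were $[0,T^*)$ with $T^*<\infty$, then necessarily $\|X(t)\|\to +\infty$ as $t\to T^*$. It therefore suffices to produce, for every finite $T>0$, an a priori bound on each coordinate of $X$ on $[0,T]\cap[0,T^*)$. Since by Lemma \ref{positivite} the solution stays non-negative, one may work directly with system \eqref{eq:system-S} (that is, $X^+=X$) and simply exploit the signs of the stabilising feedback terms.

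First I would handle the four ``source-limited'' coordinates $N$, $C$, $Q_N$, $Q_C$. Discarding the non-negative terms $-\tauN D$ in $F_1$, $-\tauC D$ in $F_2$, $-\sigmaN-\tauD Q_N$ in $F_3$, $-\alpha\sigmaN-\sigmaM-\tauD Q_C$ in $F_4$, and using that $\Nin,\Cin\le\mathcal{M}$ together with the Michaelis--Menten bounds $\tauN\le V^N_{\max}\le\mathcal{M}$ and $\tauC\le V^C_{\max}\le\mathcal{M}$, one gets $\diff{N}{t}\le a\Nin\le C$, $\diff{C}{t}\le C$, $\diff{Q_N}{t}\le\tauN\le C$ and $\diff{Q_C}{t}\le\tauC\le C$. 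Hence each of these four quantities grows at most linearly and is bounded by $X_i(0)+Ct$ on $[0,T]$.

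Next comes the only genuinely nonlinear coordinate, $D$. The key observation is that, by the first item of Lemma \ref{lem:FonctionA}, $0\le\A\le 1$, so $\tauD=\mu_D\,\A(\QCMin,Q_C,\QNMin,Q_N)\le\mu_D\le\mathcal{M}$; therefore $\diff{D}{t}=(\tauD-a-m_D)D\le\tauD D\le CD$, and Gronwall's inequality yields $D(t)\le D(0)\,e^{Ct}$, which is bounded on $[0,T]$. Finally $M$ is slaved to $D$: since $\tauM\le\mu_M\le\mathcal{M}$, one has $\diff{M}{t}=\tauM D-aM\le C D\le C D(0)\,e^{Ct}$, whence $M(t)\le M(0)+C D(0)\int_0^t e^{Cs}\ds$ is again bounded on $[0,T]$.

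Collecting these estimates, $\|X(t)\|$ stays bounded on every bounded subinterval of $[0,T^*)$, which contradicts the blow-up alternative unless $T^*=+\infty$; hence the maximal solution is global. I do not anticipate any real obstacle: the only points requiring care are the order of the estimates (one must bound $D$ before $M$) and the systematic use of non-negativity to throw away the stabilising terms, combined with the uniform bound $\A\le 1$ which is precisely what prevents $\tauD$ and $\tauM$ from amplifying $D$ (and hence $M$) faster than exponentially.
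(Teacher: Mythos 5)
Your proof is correct, and it takes a genuinely different route from the paper. The paper runs a single energy estimate on $\Vert X\Vert^2$: it computes $\frac12\diff{\Vert X\Vert^2}{t}=F(t,X)\cdot X$, drops the stabilising (negative) terms using $X\ge 0$, applies Young's inequality to absorb the cross terms, and concludes $\Vert X(t)\Vert^2\le e^{Ct}(\Vert X_0\Vert^2+Ct)$ via Gronwall, then invokes the blow-up alternative. You instead exploit the triangular structure of the right-hand side: the coordinates $N,C,Q_N,Q_C$ are directly self-limiting and only acquire linear growth once the negative feedbacks are dropped (using $\tau_N\le V^N_{\max}$, $\tau_C\le V^C_{\max}$); $D$ then satisfies a linear differential inequality with a coefficient uniformly bounded by $\mu_D$ (since $0\le\A\le 1$), hence is at most exponential; and $M$ is driven by $D$ with a uniformly bounded rate $\tau_M\le\mu_M$, so it is bounded on any $[0,T]$. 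Your approach is arguably a bit sharper, since it yields coordinate-wise growth rates (linear for the nutrient and quota variables, exponential only for $D$ and $M$), whereas the paper's $L^2$ estimate lumps everything into a single exponential bound; conversely, the paper's single Gronwall argument is slightly more uniform and is the same inequality \eqref{Gronwall} that is reused later to obtain the compact set $[-\mathcal{K},\mathcal{K}]^6$ for the stability analysis, so the authors get extra mileage out of it. Both arguments rest on the same two ingredients (positivity from Lemma~\ref{positivite} and the uniform bound $0\le\A\le 1$ from Lemma~\ref{lem:FonctionA}) and both conclude by the blow-up alternative attached to the Picard--Lindel\"of maximal solution of Lemma~\ref{lem:ineq-F}.
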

\begin{proof}
The Euclidian norm on $\R^n$ is denoted $\Vert \cdot\Vert $.
Consider a solution $X(t)$ to problem (\ref{eq:system-S}) for an initial condition $X_0\ge 0$:

$$\dfrac{1}{2} \diff{\Vert X(t)\Vert^2}{t} = \diff{ X(t) }{t} \cdot X(t) 
= F(t,X(t))\cdot X(t),
$$
and, by a sign argument (since $X(t)\ge 0$ with lemma \ref{positivite}),  
\begin{align*}
  &F(t,X(t))\cdot X(t) \leq 
    a \Nin N + a \Cin C + V^N_{\max} \dfrac{NQ_N}{K_N+N} + V^C_{\max} \dfrac{CQ_C}{K_C+C}  \nonumber
  \\ &+ \mu_D \A(\QCMin,Q_C,\QNMin,Q_N) D^2  +\mu_M \A(\QCMin,Q_C,\alpha Q_N,Q_C)DM  \nonumber
  \\ \leq & 
            a \|\Nin\|_\infty^2 + aN^2 + a\|\Cin\|_\infty^2 +  aC^2 + \frac{\|V^N_{\max}\|_\infty}{\min_t K_N(t)} NQ_N + \frac{\|V^C_{\max}\|_\infty}{\min_t K_C(t)} CQ_C \nonumber
  \\ &+ \|\mu_D\|_\infty D^2  +\|\mu_M\|_\infty DM  \nonumber
       \leq 
       \dfrac{1}{2}C \left ( 1  +  \Vert X(t)\Vert^2 \right ),
\end{align*}
where $C$ is a constant depending on $\mathcal{M}$ in assumptions (\ref{bornes})~(\ref{bornes2}). 
Then  Gronwall lemma yields,
\begin{align}\label{Gronwall}
  |X(t)|^2 \leq e^{C t} 
  \left (
  |X_0|^2 + Ct \right ).
\end{align}
Assume that the maximal solution $X(t)$ is not a global one. Thus, there exists $T>0$ such that $\lim_{t \to T^-}||X(t)||=+\infty$, which contradicts the above inequality. 
\end{proof}
\subsection{Biologically relevant solutions} 
With lemma \ref{positivite} and \ref{globale}, system (\ref{eq:system-S}) 
associated with a non-negative initial condition has a unique solution which is non-negative and global.
It is stated here that the quota remain larger than the minimum quota if this is true at initial time, \textit{i.e.} biologically relevant initial condition are associated with biologically relevant solutions.
\begin{lemma}\label{seuil}
   With assumptions \eqref{bornes} and \eqref{bornes2}, 
   consider the solution to problem (\ref{eq:system-S}) 
   for an initial condition $X_0 \ge 0$.
   \\
   If there exists $t_0$ such that  $Q_N(t_0)\geq \QNMin$, then $Q_N(t)\geq \QNMin$ for any $t\geq t_0$.
   \\
   Similarly, if there exists $t_0$ such that  $Q_C(t_0)\geq \QCMin$, then $Q_C(t)\geq \QCMin$ for any $t\geq t_0$.
\end{lemma}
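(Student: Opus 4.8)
The plan is to reuse, nearly verbatim, the barrier argument from the proof of lemma~\ref{positivite}, the new ingredient being that the consumption terms of the quota equations switch off exactly when a quota reaches its minimum. The first step is an elementary observation on the function $\A$ of \eqref{def:FonctionA}: whenever $p\ge 0$ and $0\le q\le p$ one has $1-\frac{p^+}{\max(p^+,q^+)}=1-\frac{p}{p}=0$, so, since $0\le\A\le 1$ by lemma~\ref{lem:FonctionA}, $\A$ vanishes as soon as one of its two defining $\min$-arguments is of that form. Concretely, $\A(\QCMin,Q_C,\QNMin,Q_N)=0$ whenever $0\le Q_C\le\QCMin$ \emph{or} $0\le Q_N\le\QNMin$, and $\A(\QCMin,Q_C,\alpha Q_N,Q_C)=0$ whenever $0\le Q_C\le\QCMin$. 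Combined with the identities $\sigmaN=\Theta_D\,\A(\QCMin,Q_C,\QNMin,Q_N)$, $\tauD=\mu_D\,\A(\QCMin,Q_C,\QNMin,Q_N)$ and $\sigmaM=\Theta_M\,\A(\QCMin,Q_C,\alpha Q_N,Q_C)$ recorded in the proof of lemma~\ref{lem:ineq-F}, this shows that $\sigmaN=\tauD=0$ on the set $\{Q_N\le\QNMin\}$ and $\sigmaN=\tauD=\sigmaM=0$ on the set $\{Q_C\le\QCMin\}$ (all quantities evaluated along the non-negative solution).

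The second step is to feed this into the two quota equations of \eqref{eq:system-S}. Since the solution is non-negative by lemma~\ref{positivite} and $V^N_{\max},K_N,V^C_{\max},K_C>0$, the uptake rates $\tauN(t,N)$ and $\tauC(t,C)$ are non-negative, whence along the solution one has, on $\{Q_N\le\QNMin\}$,
\begin{align*}
  F_3(t,X)=\tauN(t,N)-\sigmaN-\tauD\,Q_N=\tauN(t,N)\ge 0,
\end{align*}
and, on $\{Q_C\le\QCMin\}$,
\begin{align*}
  F_4(t,X)=\tauC(t,C)-\alpha\,\sigmaN-\sigmaM-\tauD\,Q_C=\tauC(t,C)\ge 0.
\end{align*}

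The third step is the barrier argument itself, now around the level $\QNMin$ (resp.\ $\QCMin$) instead of $0$. Set $\phi(t)=(\QNMin-Q_N(t))^+$; as $X$ is absolutely continuous, the chain rule in Sobolev spaces gives $\phi'(t)=-F_3(t,X(t))\,1_{\{Q_N(t)\le\QNMin\}}$ for a.e.\ $t$, which is $\le 0$ by the second step. Since $\phi\ge 0$ and, by hypothesis, $\phi(t_0)=0$, we conclude $\phi(t)=0$ for every $t\ge t_0$, i.e.\ $Q_N(t)\ge\QNMin$; replacing $\phi$ by $\psi(t)=(\QCMin-Q_C(t))^+$ and $F_3$ by $F_4$ settles the claim for $Q_C$. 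The only delicate point is the first step: once one sees that $\A$ vanishes at the threshold, the negative terms $-\sigmaN$, $-\sigmaM$ and $-\tauD\,Q$ drop out and what remains is a literal transcription of the positivity proof; beyond that there is no genuine obstacle, merely the bookkeeping of which of $\sigmaN$, $\tauD$, $\sigmaM$ vanish in each case.
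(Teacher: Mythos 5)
Your proof is correct and follows essentially the same route as the paper's: both observe that the consumption terms vanish when the relevant quota is at or below its minimum (because $\A$ becomes zero), so the derivative of $(\QNMin-Q_N)^+$ (equivalently $(Q_N-\QNMin)^-$) is nonpositive, and then integrate from $t_0$. The only cosmetic difference is that the paper applies the chain rule directly to $(Q_N-\QNMin)^-$ and cancels the consumption terms inside the bracket, whereas you first isolate the vanishing of $\A$ as a standalone observation; note the indicator should strictly read $1_{\{Q_N<\QNMin\}}$ from the Sobolev chain rule, but this does not affect the conclusion.
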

\begin{rem}
  Note that 
  \\
  i) if $Q_N \leq \QNMin$ in $(0,T)$ then $Q_N(t) = Q_N(0) + \displaystyle\int_0^t  \dfrac{V^N_{\max}(s)N(s)}{K_N(s)+N(s)}\ds$ if $t \leq T$.
  \\
  ii) if $Q_C \leq \QCMin$ in $(0,T)$ then $Q_C(t) = Q_C(0) + \displaystyle\int_0^t  \dfrac{V^C_{\max}(s)C(s)}{K_C(s)+C(s)}\ds$ if $t \leq T$. 
  \\
  iii) if $Q_N \leq \QNMin$  or $Q_C \leq \QCMin$ in $(0,T)$ then
  $D(t) = D(0)e^{-(a+m_D)t}$ in $(0,T)$.
  \\
  Therefore, in case of $N$ or $C$ limitation, 
  the diatoms are no longer able to divide and 
  diatom  population decreases and goes to 0 
  as long as this situations continues.
\end{rem} 
\begin{proof}
  The proof is similar as for lemma \ref{positivite}.
  Consider the component $X_3 = Q_N$ of $X(t)$,
\begin{align*}
  &\diff{ (Q_N-\QNMin)^-}{t}  = - 1_{\{Q_N<\QNMin\}}\diff{ Q_N}{t},
  \\ =& - 1_{\{Q_N<\QNMin\}}\Big[ V^N_{\max}(t) \dfrac{N}{K_N(t)+N}
        -[\Theta_D(t)+\mu_D(t) Q_N] \A(\QCMin,Q_C,\QNMin,Q_N)   \Big]
  \\ =& - 1_{\{Q_N<\QNMin\}} V^N_{\max}(t) \dfrac{N}{K_N(t)+N} \leq 0.
\end{align*}
Assume that $Q_N(t_0) \ge \QNMin $ for $t_0\ge 0$. 
since $(Q_N-\QNMin)^-$ is 
absolutely continuous, the integration of the above inequality for  $ t\geq t_0$ yields, 
$$(Q_N-\QNMin)^-(t) \leq (Q_N-\QNMin)^-(t_0)=0,$$
and so $Q_N(t)\geq \QNMin$.
Similarly,
\begin{align*}
  &\diff{ (Q_C-\QCMin)^-}{t} =- 1_{\{Q_C<\QCMin\}}\diff{ Q_C}{t}
  \\=& - 1_{\{Q_C<\QCMin\}} \Big[ V^C_{\max}(t) \dfrac{C}{K_C(t)+C} -[\alpha \Theta_D(t)+\mu_D(t) Q_C] \A(\QCMin,Q_C,\QNMin,Q_N)  
  \\ &
       \hspace{7.6cm}- \Theta_M(t) \A(\QCMin,Q_C,\alpha Q^+_N,Q_C)\Big]
  \\[-0.5cm]
  =& - 1_{\{Q_C<\QCMin\}} V^C_{\max}(t) \dfrac{C}{K_C(t)+C} \leq 0,
\end{align*}
which leads to the same result.
\end{proof}
\subsection{Stability with respect to parameters} 
This section deals with the stability of system (\ref{eq:system-S})
solutions with respect to its parameters.
The case of constant parameters is studied on a fixed time interval $[0,T]$.
As in theorem \ref{theo:main-result}, the set of parameters 
is denoted by a vector $A\in \mathcal{A}=(\frac1{\mathcal{M}},\mathcal{M})^{15}$, with $\mathcal{M}$ the bound in assumptions (\ref{bornes})~(\ref{bornes2}),
\begin{align*}
  A=(a,\Nin,\Cin,V^N_{\max}, K_N, V^C_{\max}, K_C, \QCMin, \QNMin, \alpha, \mu_D, \mu_M, \Theta_D, \Theta_M, m_D) \in \mathcal{A}.
\end{align*}
Through this section,
\begin{itemize}
\item 
an initial condition $X_0\ge 0$ and a final time $T>0$ are fixed,
\item 
$F:(Y,A)\in \R^6 \times \mathcal{A} \mapsto F(Y,A) \in \R^6$ denotes the function defining system \eqref{eq:system-S},
\item 
for any $t\in [0,T]$ and $A\in \mathcal{A}$, set $X(t,A)$ the value at time $t$ of the solution to the Cauchy problem $\diff{}{t} Y = F(Y,A)$ associated with the initial condition $Y(0)=X_0$,
\item 
with the previous lemma, this function is well defined, $X(t,A)\ge 0$ 
and 
with inequality (\ref{Gronwall}), there exists a constant $\mathcal{K}>0$ only depending on $T$ and $\mathcal{M}$ such that $0 \le X(t,A) \le \mathcal{K}$,
\item 
  we then consider the function 
  $X:~[0,T]\times\mathcal{A}\to \R^6$
  and we consider the usual notations:
  $X(t,\cdot):~\mathcal{A}\to \R^6$ is the function $A \in\mathcal{A}\mapsto X(t,A)$ and $X(\cdot,A):~[0,T]\to\R^6$ is the function $t\in [0,T]\mapsto X(t,A)$.
\end{itemize}
As a consequence, the fonction $F$ of system (\ref{eq:system-S}) 
 can be restricted to $(-\mathcal{K},\mathcal{K})^6 \times  \mathcal{A}$, or its closure $[-\mathcal{K},\mathcal{K}]^6\times \overline{\mathcal{A}}$ when a compact set is needed.
\begin{lemma}
  \label{lem:F-glob-lipschtz}
  There exists $\mathcal{L}>0$, only depending on $T$ and $\mathcal{M}$, such that the fonction $F$ is $\mathcal{L}$-Lipschitz continuous on $[-\mathcal{K},\mathcal{K}]^6\times \overline{\mathcal{A}}$.
\end{lemma}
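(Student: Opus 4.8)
The plan is to revisit the proof of Lemma~\ref{lem:ineq-F}, where $F$ (understood, as there, through its representation by $\A$ and by the maps $N\mapsto N^+/(K_N+N^+)$, so that it is defined on all of $\R^6$) was shown to be locally Lipschitz in $X$, and to upgrade that argument in two respects: track the Lipschitz dependence on the parameter vector $A$ as well, and exploit compactness of $[-\mathcal{K},\mathcal{K}]^6\times\overline{\mathcal{A}}$ to turn the local constants into a single uniform one. The whole argument is elementary bookkeeping built on three facts: a product of two \emph{bounded} Lipschitz functions is Lipschitz, a composition of Lipschitz functions is Lipschitz, and a finite sum of Lipschitz functions is Lipschitz — with all constants involved depending only on $\mathcal{M}$ and on the sup-bound $\mathcal{K}$ of the domain, hence only on $\mathcal{M}$ and $T$.

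First I would record the two basic building blocks. (i) On $[-\mathcal{K},\mathcal{K}]\times[\tfrac1{\mathcal{M}},\mathcal{M}]^2$ the map $(N,V,K)\mapsto V\,N^+/(K+N^+)$ is bounded by $\mathcal{M}$ and Lipschitz: $N^+\mapsto N^+/(K+N^+)$ has derivative $\le 1/K\le\mathcal{M}$, the same bound holds for its $K$-derivative, $|N_1^+-N_2^+|\le|N_1-N_2|$, and multiplying by $V\in[\tfrac1{\mathcal{M}},\mathcal{M}]$ preserves boundedness and the Lipschitz property; this settles $\tau_N$ as a function of $(N,V^N_{\max},K_N)$ and likewise $\tau_C$. (ii) For the $\A$-terms, Lemma~\ref{lem:FonctionA} supplies exactly what is needed: in $\sigma_N$ and $\tau_D$ the arguments are $(\QCMin,Q_C^+,\QNMin,Q_N^+)$ with $\QCMin,\QNMin\ge\tfrac1{\mathcal{M}}$, so by the second point of that lemma with $\delta=\tfrac1{\mathcal{M}}$ together with $|x^+-y^+|\le|x-y|$, the composition is $2\mathcal{M}$-Lipschitz in $(\QCMin,Q_C,\QNMin,Q_N)$; in $\sigma_M$ and $\tau_M$ the arguments are $(\QCMin,Q_C^+,\alpha Q_N^+,Q_C^+)$ with $\QCMin\ge\tfrac1{\mathcal{M}}$, so the third point of the lemma applies once one observes that $(\alpha,Q_N)\mapsto\alpha Q_N^+$ is bounded by $\mathcal{M}\mathcal{K}$ and Lipschitz on $[\tfrac1{\mathcal{M}},\mathcal{M}]\times[-\mathcal{K},\mathcal{K}]$, again as a product of bounded Lipschitz maps.

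Then I would assemble each coordinate $F_i$ from these pieces. For example $F_1=a(\Nin-N)-\tau_N(N^+,V^N_{\max},K_N)\,D^+$: the first summand is affine in $(a,\Nin,N)$ with coefficients bounded by $\mathcal{M}+\mathcal{K}$, hence Lipschitz, and the second is a product of the bounded ($\le\mathcal{M}$) Lipschitz function $\tau_N$ with the bounded ($\le\mathcal{K}$) Lipschitz function $D^+$. Exactly the same pattern — finitely many sums and products of bounded Lipschitz factors, the factors $Q_N^+,Q_C^+,D^+$ being bounded by $\mathcal{K}$, the parameters $\Theta_D,\Theta_M,\mu_D,\mu_M$ being bounded by $\mathcal{M}$ and the $\A$-values by $1$, and $\sigma_C=\alpha\sigma_N+\sigma_M$ being a further sum/product of such terms — yields the Lipschitz property of $F_2,\dots,F_6$ on $[-\mathcal{K},\mathcal{K}]^6\times\overline{\mathcal{A}}$. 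Each $F_i$ is thus $\mathcal{L}_i$-Lipschitz there with $\mathcal{L}_i$ a function of $\mathcal{M}$ and $\mathcal{K}$ only; taking $\mathcal{L}=\sqrt6\,\max_i\mathcal{L}_i$ and recalling that $\mathcal{K}$ depends only on $T$ and $\mathcal{M}$ concludes the proof.

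There is no real obstacle here: the substance is entirely packaged in Lemma~\ref{lem:FonctionA} and in the elementary calculus of bounded Lipschitz functions. The only points deserving a moment of care are (a) checking that every factor entering a product is genuinely \emph{bounded} on the compact domain — this is where compactness, through the constant $\mathcal{K}$, is essential, since a merely local Lipschitz bound would not be uniform — and (b) handling the $\sigma_M$, $\tau_M$ terms via the third part of Lemma~\ref{lem:FonctionA} combined with the map $(\alpha,Q_N)\mapsto\alpha Q_N^+$, because $\alpha$ is itself one of the parameters being varied.
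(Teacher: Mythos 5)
Your proposal is correct and follows essentially the same route as the paper: invoke Lemma~\ref{lem:FonctionA} for the $\mathbb{A}$-based terms, use the Michaelis--Menten structure for $\tau_N,\tau_C$, and let compactness (through the bound $\mathcal{K}$ on the state, which depends only on $T$ and $\mathcal{M}$) turn all constants into uniform ones. The only organizational difference is that the paper recalls Lipschitzness in $X$ from Lemma~\ref{lem:ineq-F} (made global by compactness) and then separately treats Lipschitzness in $A$ term by term, whereas you establish joint Lipschitzness in $(X,A)$ via the algebra of bounded Lipschitz factors; in fact your explicit handling of $(\alpha,Q_N)\mapsto\alpha Q_N^+$ inside $\tau_M$ and $\sigma_M$ through the third item of Lemma~\ref{lem:FonctionA} is slightly more careful than the paper's displayed bound for $\tau_M$, which omits the $|\alpha_1-\alpha_2|$ contribution.
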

\begin{proof}
  It has already been proven in lemma \ref{lem:ineq-F} that $F$ is locally Lipschitz in $X$, it is therefore globally Lipschitz in $X$ on the compact $[-\mathcal{K},\mathcal{K}]^6\times \overline{\mathcal{A}}$.
Let us prove that it is also globally Lipschitz in $A$: the variable $X$ is assumed to be in $[-\mathcal{K},\mathcal{K}]^6$ and
two parameters $A_1,A_2 \in \overline{\mathcal{A}}$ are considered.
\\
The function $\tau_N$ in (\ref{tau_N}) satisfies,
\begin{align*}
  \big|\tau_N(X,A_1)
  -\tau_N(X,A_2)\big|
  \leq  
  \mathcal{K}\mathcal{M}|V^{N,1}_{\max}-V^{N,2}_{\max}|        
  + \mathcal{M}^3\mathcal{K} |K_{N}^1-K_{N}^2|
\end{align*}
and the same inequality holds for $\tau_C$ in (\ref{tau_C}) by changing $N$ into $C$.
Thus, $F_1$ and $F_2$ are globally Lipschitz in $A$.
With lemma \ref{lem:FonctionA}, the function $\tau_D$ in (\ref{tau_D})
satisfies,
\begin{align*}
  \big|\tau_D(X,A_1)
  -\tau_D(X,A_2)\big|
  \leq  
  \vert \mu_D^1- \mu_D^2\vert 
  + 2\mathcal{M}^2|Q^{C,\,1}_{\rm min}-Q^{C,\,2}_{\rm min}| 
  + 2\mathcal{M}^2|Q^{N,\,1}_{\rm min}-Q^{N,\,2}_{\rm min}|,
\end{align*}
and a similar inequality holds for $\sigma_N$ in (\ref{sigma_N}), consequently $F_3$ and $F_5$ are globally Lipschitz in $A$.
Using again lemma \ref{lem:FonctionA}, the function $\tau_M$ in (\ref{tau_M})
satisfies,
\begin{align*}
  \big|\tau_M(X,A_1)
  -\tau_M(X,A_2)\big|
  \leq  
  \vert \mu_M^1- \mu_M^2\vert 
  + 2\mathcal{M}^2|Q^{C,\,1}_{\rm min}-Q^{C,\,2}_{\rm min}| 
\end{align*}
and a similar inequality holds for $\sigma_M$ in (\ref{sigma_M}), consequently $F_4$ and $F_6$ are globally Lipschitz in $A$, which ends the proof.
\end{proof}
\vspace{10pt} \indent
As a direct consequence, we have the following first regularity result.
\begin{lemma} 
  \label{lem:X-weak-differentiable}
  Set $\mathcal{Q}=(0,T)\times \mathcal{A}$,
  $X$ and $\partial_t X$ are bounded Lipschitz-continuous functions on $\overline{\mathcal{Q}}$, 
and therefore belong to $W^{1,\infty}(\mathcal{Q})^6$ \cite[Thm. 4.5]{EvansGar}.
  \\[0.1cm]
  In particular, the application $A \in \mathcal{A} \mapsto X(\cdot,A) \in [C^1([0,T])]^6$ is Lipschitz-continuous.
\end{lemma}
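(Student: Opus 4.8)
The plan is to derive Lemma~\ref{lem:X-weak-differentiable} directly from the global Lipschitz bound of Lemma~\ref{lem:F-glob-lipschtz}, the a priori bound $0\le X(t,A)\le\mathcal{K}$, and the fact that $X$ solves an autonomous ODE with parameter. First I would establish that $t\mapsto X(t,A)$ is uniformly Lipschitz on $[0,T]$, uniformly in $A$: indeed $\partial_t X(t,A)=F(X(t,A),A)$ and since $(X(t,A),A)$ stays in the compact $[-\mathcal{K},\mathcal{K}]^6\times\overline{\mathcal{A}}$, continuity of $F$ gives $\|\partial_t X(t,A)\|\le C_0:=\sup_{[-\mathcal{K},\mathcal{K}]^6\times\overline{\mathcal{A}}}\|F\|<\infty$, a bound depending only on $T$ and $\mathcal{M}$. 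Hence $|X(t,A)-X(s,A)|\le C_0|t-s|$.

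Next I would prove Lipschitz dependence in $A$: for fixed $t$, given $A_1,A_2\in\overline{\mathcal{A}}$, write
\begin{align*}
  X(t,A_1)-X(t,A_2)=\int_0^t\Big(F(X(s,A_1),A_1)-F(X(s,A_2),A_2)\Big)\ds,
\end{align*}
and split $F(X(s,A_1),A_1)-F(X(s,A_2),A_2)=\big(F(X(s,A_1),A_1)-F(X(s,A_2),A_1)\big)+\big(F(X(s,A_2),A_1)-F(X(s,A_2),A_2)\big)$. By Lemma~\ref{lem:F-glob-lipschtz} the first bracket is bounded by $\mathcal{L}|X(s,A_1)-X(s,A_2)|$ and the second by $\mathcal{L}|A_1-A_2|$. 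Setting $\varphi(t)=|X(t,A_1)-X(t,A_2)|$ we get $\varphi(t)\le \mathcal{L}T|A_1-A_2|+\mathcal{L}\int_0^t\varphi(s)\ds$ (using $\varphi(0)=0$), and Gr\"onwall's lemma yields $\varphi(t)\le \mathcal{L}T e^{\mathcal{L}T}|A_1-A_2|$. This is the uniform-in-$t$ Lipschitz estimate in $A$, and combined with the previous step it shows $X$ is (jointly) Lipschitz on $\overline{\mathcal{Q}}=[0,T]\times\overline{\mathcal{A}}$: $|X(t_1,A_1)-X(t_2,A_2)|\le C_0|t_1-t_2|+\mathcal{L}Te^{\mathcal{L}T}|A_1-A_2|$.

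For $\partial_t X$ I would use $\partial_t X(t,A)=F(X(t,A),A)$ together with the joint Lipschitz continuity of $X$ just obtained and the Lipschitz continuity of $F$ from Lemma~\ref{lem:F-glob-lipschtz}: a composition of Lipschitz maps is Lipschitz, so $\partial_t X$ is Lipschitz on $\overline{\mathcal{Q}}$, hence bounded. A bounded Lipschitz function on the (nice, convex bounded) domain $\mathcal{Q}$ is in $W^{1,\infty}(\mathcal{Q})$ by \cite[Thm.~4.5]{EvansGar}, which gives $X\in W^{1,\infty}(\mathcal{Q})^6$. Finally, for the last sentence: the map $A\mapsto X(\cdot,A)$ into $C^1([0,T])^6$ is Lipschitz because $\|X(\cdot,A_1)-X(\cdot,A_2)\|_{C^0}\le \mathcal{L}Te^{\mathcal{L}T}|A_1-A_2|$ from step two, and $\|\partial_tX(\cdot,A_1)-\partial_tX(\cdot,A_2)\|_{C^0}=\|F(X(\cdot,A_1),A_1)-F(X(\cdot,A_2),A_2)\|_{C^0}\le \mathcal{L}\big(\|X(\cdot,A_1)-X(\cdot,A_2)\|_{C^0}+|A_1-A_2|\big)$, which is again controlled by a constant times $|A_1-A_2|$. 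I expect no serious obstacle here; the only mildly delicate point is checking the hypotheses of the Evans--Gariepy characterization (essentially that $\mathcal{Q}$ is an extension domain, e.g. a box, so that Lipschitz equals $W^{1,\infty}$), but since $\mathcal{Q}=(0,T)\times(\tfrac1{\mathcal M},\mathcal M)^{15}$ is an open box this is immediate.
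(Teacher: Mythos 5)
Your proposal is correct and follows essentially the same route as the paper: bound $\partial_t X=F(X,A)$ using the a priori bound on $X$ and boundedness of $F$ on the compact set, obtain Lipschitz dependence in $A$ by a Gr\"onwall argument from Lemma~\ref{lem:F-glob-lipschtz}, and deduce Lipschitz continuity of $\partial_t X$ by composing with the Lipschitz map $F$. The only cosmetic difference is that you present the Lipschitz estimate jointly in $(t,A)$ and invoke the composition rule once, whereas the paper verifies the $t$- and $A$-Lipschitz estimates for $X$ and $\partial_t X$ separately, and you helpfully note explicitly that $\mathcal{Q}$ is a box so the Evans--Gariepy equivalence applies.
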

\begin{proof}
  Firstly $X$ and $\partial_t X$ are bounded on $[0,T]\times \mathcal{A}$,
  \begin{displaymath}
    \|X(t,A)\| \leq \mathcal{K},
    \quad \|\partial_t X(t,A)\| \leq \|F\|_\infty,
  \end{displaymath}
  then, 
  for any $A,B\in\mathcal{A}$ and $s,t\in[0,T]$,  
  \begin{displaymath}
    \|X(t,A)-X(s,A)\| \leq \int_s^t \|F(X(\tau,A),A)\|d\tau \leq \|F\|_\infty(t-s)
  \end{displaymath}
  and so $X$ is Lipschitz in $t$. Moreover, with lemma 
  \ref{lem:F-glob-lipschtz}
  \begin{displaymath}
    \|\partial_t X(t,A)-\partial_t X(s,A)\| =  \|F(X(t,A),A)-F(X(s,A),A)\| 
      \leq \mathcal{L}\|F\|_\infty(t-s),
  \end{displaymath}
  and $\partial_t X$ is also Lipschitz in $t$. We now prove that $X$ is also Lipschitz in $A$, 
  \begin{align*}
    \|X(t,A)-X(t,B)\| &= \|\int_0^t  F(X(s,A),A)-F(X(s,B),B) \ds\| 
    \\
    & \leq 
      \mathcal{L}\int_0^t\|X(s,A)-X(s,B)\| + \|A-B\|\ds,
  \end{align*}
  and therefore, Gronwall's lemma yields for ant $t \in [0,T]$,  
  \begin{displaymath}
    \|X(t,A)-X(t,B)\| \leq  \mathcal{L}Te^{\mathcal{L}T}\|A-B\|.
  \end{displaymath}
  This implies that $\partial_t X$ is Lipschitz in $t$,
  \begin{align*}
    \|\partial_tX(t,A)-\partial_tX(t,B)\| 
    &= \|F(X(t,A),A)-F(X(t,B),B)\| 
    \\
    &\leq 
    \mathcal{L}\Big[ \|X(t,A)-X(t,B)\| + \|A-B\|\Big],
  \end{align*}
  which ends the proof.
\end{proof}
\vspace{10pt} \indent
Higher regularity is obtained for $X:~A\mapsto X(\cdot,A)$ 
with values in $C(\overline{\mathcal{A}})$ 
which is differentiable in $[0,T]$ as now stated.
\begin{lemma}
  \label{lem:stablilite}
  The function $X$ satisfies, 
  $X \in C^1([0,T],C^0(\overline{\mathcal{A}})^6) \cap {\rm Lip}([0,T],W^{1,\infty}(\mathcal{A})^6)$.
\end{lemma}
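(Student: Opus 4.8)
The plan is to upgrade the conclusion of Lemma \ref{lem:X-weak-differentiable} — namely that $A\mapsto X(\cdot,A)$ is Lipschitz from $\overline{\mathcal A}$ into $C^1([0,T])^6$ — to the two stronger statements $X\in C^1([0,T],C^0(\overline{\mathcal A})^6)$ and $X\in\mathrm{Lip}([0,T],W^{1,\infty}(\mathcal A)^6)$. For the first statement, I would work with the Banach space $E=C^0(\overline{\mathcal A})^6$ and view the map $t\mapsto X(t,\cdot)\in E$. We already know (Lemma \ref{lem:F-glob-lipschtz}) that $F$ is globally $\mathcal L$-Lipschitz on the compact set $[-\mathcal K,\mathcal K]^6\times\overline{\mathcal A}$, and that $X$ takes values in $[0,\mathcal K]^6$. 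The integral equation $X(t,\cdot)=X_0+\int_0^t F(X(s,\cdot),\cdot)\,\mathrm ds$ then holds as an identity in $E$: indeed $s\mapsto F(X(s,\cdot),\cdot)$ is a continuous $E$-valued map because $A\mapsto X(s,A)$ is continuous (Lemma \ref{lem:X-weak-differentiable}) and $F$ is uniformly continuous on the compact set, so the composition is continuous and the Bochner integral makes sense. Differentiating the integral equation in $t$ gives $\partial_t X(t,\cdot)=F(X(t,\cdot),\cdot)\in E$, and the right-hand side is continuous in $t$ in the norm of $E$ by the same composition-of-uniformly-continuous-maps argument (using that $t\mapsto X(t,\cdot)$ is itself continuous, indeed Lipschitz, into $E$ — this is exactly the $A$-uniform $t$-Lipschitz bound $\|F\|_\infty$ established in Lemma \ref{lem:X-weak-differentiable}). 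Hence $t\mapsto X(t,\cdot)$ is $C^1$ into $E$, which is the first claim.

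For the second statement, $X\in\mathrm{Lip}([0,T],W^{1,\infty}(\mathcal A)^6)$, I would argue that $t\mapsto X(t,\cdot)$ is Lipschitz into the Banach space $G=W^{1,\infty}(\mathcal A)^6$. The $L^\infty$-part of the $W^{1,\infty}$ norm is controlled by the $t$-Lipschitz estimate $\|X(t,A)-X(s,A)\|\le\|F\|_\infty|t-s|$ from Lemma \ref{lem:X-weak-differentiable}, uniformly in $A$. For the weak-derivative part, recall from Lemma \ref{lem:X-weak-differentiable} that $X\in W^{1,\infty}(\mathcal Q)^6$ with $\mathcal Q=(0,T)\times\mathcal A$; in particular $\nabla_A X$ exists a.e.\ and is bounded on $\mathcal Q$. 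The goal is to show $t\mapsto\nabla_A X(t,\cdot)$ is Lipschitz into $L^\infty(\mathcal A)^6$. The clean way is to differentiate the integral equation in $A$: formally $\nabla_A X(t,\cdot)=\int_0^t\big[\partial_X F\,\nabla_A X+\partial_A F\big](s,\cdot)\,\mathrm ds$. One subtlety is that $F$ is only Lipschitz, not $C^1$, in $X$ (the $\min$/positive-part structure in $\A$ creates kinks), so $\partial_X F$ and $\partial_A F$ exist only a.e.; however they are bounded by $\mathcal L$, so the Rademacher/chain-rule bookkeeping for the weak derivative of the composition still goes through, and one gets $\|\nabla_A X(t,\cdot)\|_{L^\infty}\le\mathcal L\int_0^t(\|\nabla_A X(s,\cdot)\|_{L^\infty}+1)\,\mathrm ds$, hence a uniform bound by Gronwall, and then for $s<t$, $\|\nabla_A X(t,\cdot)-\nabla_A X(s,\cdot)\|_{L^\infty}\le\int_s^t\mathcal L(\|\nabla_A X(\tau,\cdot)\|_{L^\infty}+1)\,\mathrm d\tau\le C|t-s|$, which is the Lipschitz-in-$t$ estimate. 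Combining the two parts gives Lipschitz continuity into $G$.

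The main obstacle is precisely the lack of $C^1$-regularity of $F$ in $X$: one cannot simply invoke the smooth dependence-on-parameters theorem for ODEs. I would handle this by carefully justifying that the a.e.-defined partial derivatives of $F$ (which exist by Rademacher's theorem, since $F$ is Lipschitz) can be used in the variational/sensitivity integral equation, using the chain rule for Sobolev functions and the uniform $\mathcal L$-Lipschitz bound as the essential quantitative input — exactly in the spirit of the chain-rule-in-Sobolev-spaces arguments already used in the proofs of Lemmas \ref{positivite} and \ref{seuil}. An alternative, if one wants to sidestep measure-theoretic delicacies in the variational equation, is a mollification argument: approximate $F$ by a sequence $F_\varepsilon\in C^1$ with $F_\varepsilon\to F$ locally uniformly and $\mathrm{Lip}(F_\varepsilon)\le\mathcal L$ uniformly, apply the classical smooth theory to the approximating ODEs to get $X_\varepsilon\in C^1([0,T],C^1(\overline{\mathcal A})^6)$ with $t$-Lipschitz constants bounded uniformly in $\varepsilon$ (via the same Gronwall estimates, which depend only on $\mathcal L$, $\mathcal K$, $T$), and pass to the limit using weak-$*$ compactness in $W^{1,\infty}$ together with the uniform convergence $X_\varepsilon\to X$ — the uniform Lipschitz bounds are preserved in the limit, yielding both claimed regularities. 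Either route reduces the lemma to the estimates already in hand from Lemmas \ref{lem:F-glob-lipschtz} and \ref{lem:X-weak-differentiable}.
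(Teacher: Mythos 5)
Your proof of the first inclusion, $X \in C^1([0,T],C^0(\overline{\mathcal{A}})^6)$, is essentially equivalent to the paper's: both rest on the pointwise Lipschitz-in-$t$ estimate $\|\partial_t X(\tau,\cdot)-\partial_t X(t,\cdot)\|_{C^0(\overline{\mathcal{A}})}\le \mathcal{L}\|F\|_\infty|\tau-t|$ obtained in Lemma~\ref{lem:X-weak-differentiable}, and both read off $C^1$ regularity in $t$ with values in $C^0(\overline{\mathcal{A}})^6$ from it. The paper phrases this via a difference quotient and the Sobolev embedding $W^{1,p}(\mathcal{A})\hookrightarrow C^0(\overline{\mathcal{A}})$ to legitimize the Bochner integral; you instead invoke the integral form of the ODE directly in $C^0(\overline{\mathcal{A}})^6$ --- a mild repackaging of the same estimate.

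For the second inclusion, $X\in\mathrm{Lip}([0,T],W^{1,\infty}(\mathcal{A})^6)$, your route is genuinely different from the paper's, and it is the heavier one. The paper's proof uses only the output of Lemma~\ref{lem:X-weak-differentiable} ($X,\partial_t X\in W^{1,\infty}(\mathcal{Q})^6$) together with a Fubini-type identification $X\in W^{1,\infty}(0,T,W^{1,p}(\mathcal{A})^6)$ for finite $p$, the fundamental theorem of calculus for Bochner integrals (Brezis), and the interpolation-type limit $\|\cdot\|_{W^{1,p}}\to\|\cdot\|_{W^{1,\infty}}$ as $p\to\infty$. Critically, the paper works with finite $p$ precisely to sidestep the fact that $L^\infty(\mathcal{A})$ is not separable, so that Bochner measurability of $t\mapsto X(t,\cdot)\in W^{1,\infty}(\mathcal{A})^6$ cannot be taken for granted; the conclusion in $W^{1,\infty}$ is obtained only after the limit $p\to\infty$ at the level of the inequality, never by manipulating $W^{1,\infty}(\mathcal{A})$-valued Bochner integrals directly. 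Your sensitivity-equation route does exactly the latter: you propose $\nabla_A X(t,\cdot)=\int_0^t[\partial_X F\,\nabla_A X+\partial_A F](s,\cdot)\,\mathrm ds$ and a Gronwall estimate in $L^\infty(\mathcal{A})$, which silently assumes Bochner integrability of the integrand into $L^\infty(\mathcal{A})$ --- precisely the point the paper flags and works around.

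There is also a structural issue: the variational equation for $\nabla_A X$ --- which your argument for the second claim relies on --- is what the paper establishes \emph{after} Lemma~\ref{lem:stablilite}, via the Murat chain rule for Lipschitz/piecewise-$C^1$ functions and the $W^{1,p}(\mathcal{Q})$ regularity. Nothing in your sketch explains how that equation, or its a.e.-in-$A$ pointwise form with $A$-uniform constants, is available at the stage of Lemma~\ref{lem:stablilite}. The mollification alternative you mention could in principle be carried through (uniform Lipschitz bounds on $\nabla_A X_\varepsilon$, weak-$*$ compactness in $W^{1,\infty}$, identification of the limit), but it amounts to reproving the chain-rule machinery by approximation --- considerably more work than the paper's $L^p\to L^\infty$ argument, which uses only what has already been proved. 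So the first half of your proposal matches the paper; the second half would need either the measurability issue addressed head-on and the variational equation imported from later, or a full mollification argument, to be complete.
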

\begin{proof}
By previous lemma, for any $p\in [1,+\infty)$, $X,\partial_t X \in L^\infty(0,T,W^{1,p}(\mathcal{A})^6)$.
\\
Note that since $L^\infty(\mathcal{A})$ is not separable, the measurability of $X$ (resp. $\partial_t X$) as a mapping from $(0,T)$ to $W^{1,\infty}(\mathcal{A})^6$ may fail. 
\\[0.1cm]
But $X \in W^{1,\infty}(0,T,W^{1,p}(\mathcal{A})^6)$ for any finite $p$, and, if $0\leq s<t \leq T$, one has (\cite{Brezis}) that
{\small \begin{align*}
          \|X(t,\cdot)-X(s,\cdot)\|_{W^{1,p}(\mathcal{A})} \leq& \int_s^t \|\partial_t X(\tau,\cdot)\|_{W^{1,p}(\mathcal{A})}d\tau  \leq (t-s)^{\frac{p-1}{p}}\|\partial_t X\|_{L^p(0,T,W^{1,p}(\mathcal{A}))} 
          \\ \leq&  (t-s)^{\frac{p-1}{p}}\|\partial_t X\|_{W^{1,p}(\mathcal{Q})} \leq (t-s)^{\frac{p-1}{p}}\|\partial_t X\|_{W^{1,\infty}(\mathcal{Q})} [19|\mathbb{A}|]^{\frac1p}
        \end{align*}
      }
      Since $X(t,\cdot),X(s,\cdot) \in W^{1,\infty}(\mathcal{A})^6$, 
      $\lim_{p \to +\infty}\|X(t,\cdot)-X(s,\cdot)\|_{W^{1,p}(\mathcal{A})}=\|X(t,\cdot)-X(s,\cdot)\|_{W^{1,\infty}(\mathcal{A})}$, and passing to the limit $p \to +\infty$ yields 
      \begin{align*}
        \|X(t,\cdot)-X(s,\cdot)\|_{W^{1,\infty}(\mathcal{A})} \leq \|\partial_t X\|_{W^{1,\infty}(\mathcal{Q})} (t-s).
      \end{align*}
Thus, $X$ is a bounded Lipschitz-continuous function from $[0,T]$ to $W^{1,\infty}(\mathcal{A})^6$. 
      \\[0.1cm]
      By selecting $p$ such that $W^{1,p}(\mathcal{A}) \hookrightarrow C^0(\overline{\mathcal{A}})$, one gets for any $t,h$ such that $t,t+h \in [0,T]$, 
      \begin{align*}
        &\| \frac{X(t+h,.)-X(t,.)}{h}-\partial_t X(t,.)\|_{C(\overline{\mathcal{A}})} = \frac1{|h|} \| \int_t^{t+h}\partial_tX(\tau,.)-\partial_t X(t,.) \, d\tau\|_{C(\overline{\mathcal{A}})}
        \\ \leq &
                  \frac1{|h|}  \int_{(t,t+h)}\|\partial_tX(\tau,.)-\partial_t X(t,.)\|_{C(\overline{\mathcal{A}})} \, d\tau 
                  \leq \mathcal{L}\|F\|_\infty h.
      \end{align*}

      So, $X$ as a function with values in $C(\overline{\mathcal{A}})$ is differentiable in $[0,T]$, with derivative $\partial_t X$ and the result in  lemma \ref{lem:stablilite} holds.
\end{proof}
\vspace{10pt}\indent
We are now interested in the differentiation of the function $X(t,A)$
with respect to $A$. 
Since $F$ is not a continuously differentiable function, one cannot apply the implicit function theorem and our arguments will be based on the chain rule in Sobolev spaces. 
In oder to proceed, $F$ first need to be extended to a Lipschitz function on $\R^{6+15}$. 
This can be obtained by Kirszbraun theorem, but for technical reasons, it will be done by replacing each coordinate $x$ of $A$ in $F$ by $g(x)=\max(\frac1{\mathcal{M}},\min(\mathcal{M},x))$ and each coordinate $x$ of $X$ by $h(x)=\min(\mathcal{K},x^+)$ (or a regularization of these functions).
Doing so, each $F_k$ ($k=1,...,6$) is a globally Lipschitz-continuous, piecewise $C^1$ function in the sense of Murat \textit{et al} \cite{Murat}:
"\textit{there exists a finite Borel-partition $(P^i)_{i\in I_k}$ of $\R^{6+15}$ and the same number of globally Lipschitz-continuous $C^1$ functions $\widetilde F^i_k$ on $\R^{6+15}$ such that $\widetilde F^i_k= F_k$ in $P^i$.}" 
\\[0.2cm]
Thus,  \cite[Thm. 2.1]{Murat}, since $X \in W^{1,p}(\mathcal{Q})^{6}$ for any finite $p$, one gets that $F_k(X,\cdot) \in W^{1,p}(\mathcal{Q})$ with the chain rule: a.e. in $\mathcal{Q}$, 
\begin{align*}
  \nabla_{(t,A)} F_k(X,\cdot)  =& \sum_{i \in I_k} 1_{P^i}(X) (D F_k)(X) \nabla_{(t,A)} X.
\end{align*}
Moreover, $ Y \in W^{1,p}(\mathcal{Q})^{6} \mapsto  F_k(Y,\cdot)\in W^{1,p}(\mathcal{Q})$ is sequentially continuous for the weak convergences and continuous for the strong topologies.
\\[0.1cm]
As the equality $\partial_tX(t,A) = F\big(X(t,A),A\big)$ holds in  $W^{1,p}(\mathcal{Q})$, one gets that 
\begin{align*}
  \partial_t\nabla_AX(t,A) 
  &= 
  \nabla_A\partial_tX(t,A) 
    = 
  \nabla_A F\big(X(t,A),A\big) 
  \\&= 
  \Big(\sum_{i \in I_k} 1_{P^i}\big(X(t,A)\big) (D F_k)\big(X(t,A),A\big) \nabla_{A} X(t,A)\Big)_k
\end{align*}
\textit{a priori} in $L^{p}(\mathcal{Q})^{6\times 15}$ since $\nabla$ is a continuous linear map from $W^{1,p}(\mathcal{Q})^{6\times 15}$ onto $L^{p}(\mathcal{Q})^{6\times 15}$, then in $W^{1,p}(\mathcal{Q})^{6\times 15}$ thanks to the above regularity given by the chain-rule.
      \\[0.1cm]
For any 
$
  Z \in(a,\Nin,\Cin,V^N_{\max}, K_N, V^C_{\max}, K_C, \QCMin, \QNMin, \alpha, \mu_D, \mu_M, \Theta_D, \Theta_M, m_D),
$
\begin{align*}
  \partial_Z F_k\big(X(t,A),A\big)  =& \mathbb{G}_{k,Z}\big(X(t,A),A\big) + 
\mathbb{H}_k\big(X(t,A),A\big)\partial_Z X(t,A)
\end{align*}
where 
$\mathbb{G}_k=(\mathbb{G}_{k,a},\cdots,\mathbb{G}_{k,\Theta_M})^T$ 
denotes $D_AF_{k}$ 
and $\mathbb{H}_k$ denotes 
$D_XF_{k}$ 
are  bounded Borel functions.
\\
Thus, back to the evolution problem, 
\begin{align}
  \notag
  \partial_t \nabla_A X (t,A)
  & =
    \Big(
    \mathbb{G}_k\big(X(t,A),A\big) + \mathbb{H}_k\big(X(t,A),A\big)\nabla_A X(t,A)
    \Big)_k
  \\ \label{ODENabla}
  &=
    D_AF\big(X(t,A),A\big) + D_XF\big(X(t,A),A\big) \nabla_A X(t,A)
\end{align}
which  ODE is solvable 
(\cite[Caratheodory Conditions for Time-Varying Vector Fields]{Cortes}) 
\\
Finally remark that, since we restricted our study to the case of a constant initial condition $X_0$, the initial condition for the above ODE is  $\nabla_A X(0,A)=0$
leading to the following lemma.
\begin{lemma}
  \label{lem:differentiabilite}
  $\nabla_A X(\cdot,A)$ is the unique absolute-continuous function in $[0,T]$, solution to the linear ODE \eqref{ODENabla} associated with the initial condition 
    $\left (\nabla_A X\right )_0 = 0$.
\end{lemma}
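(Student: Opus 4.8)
The plan is to establish, in order, (i) that $\nabla_A X$ exists as a weak gradient, (ii) that it satisfies the linear ODE \eqref{ODENabla} in an absolutely continuous sense, and (iii) that this ODE has a unique absolutely continuous solution with the prescribed zero initial data. Most of the analytic groundwork has already been laid: by Lemma \ref{lem:X-weak-differentiable} we know $X\in W^{1,\infty}(\mathcal{Q})^6$, hence $X,\ \partial_t X\in W^{1,p}(\mathcal{Q})^6$ for every finite $p$, and the chain-rule computation (using the piecewise-$C^1$ structure of $F$ in the sense of \cite{Murat}) already carried out just above the lemma statement gives $\nabla_A F_k(X,\cdot)\in W^{1,p}(\mathcal{Q})$ together with the explicit formula for $\partial_t\nabla_A X$. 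So the first task is simply to record that, by differentiating the identity $\partial_t X(t,A)=F(X(t,A),A)$ in $A$ (legitimate in $W^{1,p}(\mathcal{Q})$ by the Murat chain rule), $\nabla_A X$ solves \eqref{ODENabla} in $W^{1,p}(\mathcal{Q})^{6\times 15}$ for every finite $p$; since the coefficients $D_XF(X(t,A),A)$ and $D_AF(X(t,A),A)$ are bounded Borel functions of $t$ (bounded because $X$ takes values in the compact $[-\mathcal K,\mathcal K]^6$ and $A\in\overline{\mathcal A}$), the right-hand side is an $L^\infty$-in-$t$ Carathéodory vector field that is affine (hence globally Lipschitz) in the unknown $\nabla_A X$, uniformly in $t$.

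Next I would invoke the Carathéodory existence and uniqueness theorem for time-varying vector fields (the reference \cite{Cortes} already cited) for the linear ODE
\[
\frac{\d}{\d t}\,Y(t) = D_XF\big(X(t,A),A\big)\,Y(t) + D_AF\big(X(t,A),A\big),
\qquad Y(0)=0,
\]
at each fixed $A\in\mathcal A$: the coefficient $t\mapsto D_XF(X(t,A),A)$ is measurable and (globally) bounded on $[0,T]$, the inhomogeneity $t\mapsto D_AF(X(t,A),A)$ is measurable and bounded, and the linear-growth/Lipschitz condition in $Y$ is trivially met, so there is a unique absolutely continuous $Y=Y(\cdot,A)$ on $[0,T]$. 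It then remains to identify $Y$ with $\nabla_A X$: for each fixed $t$, $\nabla_A X(t,\cdot)$ (which exists in $W^{1,p}(\mathcal A)$, a.e.\ in $t$, from step (i)) satisfies the same integral equation
\[
\nabla_A X(t,A) = \int_0^t D_XF\big(X(s,A),A\big)\,\nabla_A X(s,A) + D_AF\big(X(s,A),A\big)\,\ds,
\]
with the zero initial value coming from the fact that $X_0$ is a constant independent of $A$; by the uniqueness just obtained, $\nabla_A X(\cdot,A)=Y(\cdot,A)$, and in particular $\nabla_A X(\cdot,A)$ is absolutely continuous in $t$.

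The one genuine subtlety — and the step I expect to require the most care — is the measurability/Fubini bookkeeping needed to pass between the "in $\mathcal Q=(0,T)\times\mathcal A$" statements (which is where the Murat chain rule lives, since $X\in W^{1,p}(\mathcal Q)$) and the "for each fixed $A$, as a function of $t$" statements (which is where the Carathéodory ODE theory lives). Concretely: from $\nabla_A X\in W^{1,p}(\mathcal Q)^{6\times 15}$ one extracts, for a.e.\ $A$, a representative $\nabla_A X(\cdot,A)\in W^{1,p}(0,T)$ satisfying \eqref{ODENabla} for a.e.\ $t$; the above uniqueness argument then upgrades "a.e.\ $A$" to "every $A$" and "a.e.\ $t$ / $W^{1,p}$ in $t$" to "absolutely continuous in $t$", because the Carathéodory solution $Y(\cdot,A)$ is defined for every $A$ and depends on $A$ only through the already-continuous data $A\mapsto X(\cdot,A)\in C^1([0,T])^6$ (Lemma \ref{lem:X-weak-differentiable}). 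Once this identification is in place the lemma — uniqueness of $\nabla_A X(\cdot,A)$ as the absolutely continuous solution of the linear ODE with null initial condition — is immediate from the Carathéodory uniqueness already cited.
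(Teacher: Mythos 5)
Your argument follows the paper's own route exactly: differentiate the identity $\partial_t X = F(X,A)$ in $A$ via Murat's chain rule for piecewise-$C^1$ Lipschitz compositions in $W^{1,p}(\mathcal Q)$, read off the linear ODE \eqref{ODENabla} with bounded Borel coefficients, and appeal to the Carath\'eodory existence--uniqueness theory of \cite{Cortes}, with $\nabla_A X(0,A)=0$ because the initial datum $X_0$ does not depend on $A$; you are if anything more explicit than the paper about the Fubini/slicing step needed to pass from the $W^{1,p}(\mathcal Q)$ formulation to a fixed-$A$ Carath\'eodory ODE on $[0,T]$. One small imprecision: the coefficients $D_XF\big(X(t,A),A\big)$ and $D_AF\big(X(t,A),A\big)$ involve the indicators $1_{P^i}$ of the Borel partition, so they are not continuous in $A$ even though $A\mapsto X(\cdot,A)$ is, and $Y(\cdot,A)$ therefore need not depend continuously on $A$; this is harmless here, because $\nabla_A X$ is a weak gradient and the identification $\nabla_A X(\cdot,A)=Y(\cdot,A)$ only needs to hold for a.e.\ $A$, which your Carath\'eodory uniqueness already delivers.
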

\section{Numerical simulations: an inverse problem}\label{Numerical_simulations}
We wish in this section to investigate the ability of 
determining system (\ref{eq:system-S}) parameters (supposed to be constant) with data obtained from chemostat experiments.
Up to now such data are not available: a target model will be considered instead and numerical experiments will be used to measure the capacity of recovering the target model parameters.
For this, virtual chemostat data are constructed by numerically solving system (\ref{eq:system-S}) and recording its variable states with a given sampling rate.
\\
Attention has been paid, as much as possible, to build a realistic and biologically relevant target problem where TEP production is encountered.
So far, it is not possible to find a set of parameter for  system (\ref{eq:system-S})  in the literature for a given species of diatoms: some parameters are known (usually within a given range) for a specific species, other parameters can be evaluated indirectly.
This is detailed in section \ref{sec:target-problem}, variable units and parameters units and values are reported in tables 
\ref{tab:medel-var} and \ref{tab:medel-param} respectively.
\\
The parameter identification algorithm is presented in section 
\ref{parameter-identification} together with its numerical evaluation.
\subsection{The target problem}
\label{sec:target-problem}
\small{
\begin{table}[h!]
  \centering
  \caption{Variable units for system (\ref{eq:system-S}) (Xeq. is for \textit{Xanthum gum equivalent}, a standard to express TEP concentration).  }
  \vspace{7pt}
  \begin{tabular}{|c|l|l|}
    \hline
    Variable & description & unit
    \\ \hline
    $C$ & concentration in carbon nutrient & $\mu\,$mol/L
    \\
    $N$ & concentration in nitrogen nutrient & $\mu\,$mol/L
    \\
    $Q_C$ & cellular quota in carbon & $10^{-9}\mu\,$mol/cell
    \\
    $Q_N$ & cellular quota in nitrogen & $10^{-9}\mu\,$mol/cell
    \\
    $D$ & diatom concentration  & $10^{9} $ cell/L
    \\
    $M$ & TEP concentration  & g Xeq./L  
    \\  \hline
  \end{tabular}
  \label{tab:medel-var}
\end{table}
}
In this section is presented a \textit{target problem} for parameter identification
with physical variable units and with parameters in agreement with biological measurements.
\\ 
Problem (\ref{eq:system-S}) is considered with 
the variable units given in table \ref{tab:medel-var}
and with the parameters given in table \ref{tab:medel-param}.
\subsubsection{Parameter definition}
Parameter estimation for model (\ref{eq:system-S}) is not obvious: several parameters have been found in the literature (details and precise references follow)
but parameter values differ from a species to another and sometimes depend on the conditions and on the authors. A global set of parameters for a given species of diatom is not available and we will rather focus on searching order of magnitude:
without explicit mention, parameters are set to intermediate values inside range of values. 
Moreover some specific parameters could not be found in the literature and self estimations then will be considered.
\small{
\begin{table}[h!]
  \centering
  \caption{Parameters for system (\ref{eq:system-S}) }
  \vspace{7pt}
   \begin{tabular}{|c|l|l|l|}
    \hline
    Variable & description & value  & unit
    \\ \hline
    $a$ &  chemostat dilution rate&  0.59 & day$^{-1}$
    \\
    $\Cin$ &  input $C$ concentration & 2000 &$\mu\,$mol/L
    \\
    $\Nin$ &  input $N$ concentration &15  &$\mu\,$mol/L
    \\
    $V^C_{\max}$ &  max.  $C$ uptake rate& 400 & $10^{-9}\,\mu\,$mol/cell/day
    \\
    $V^N_{\max}$ &  max.  $N$ uptake rate& 70 & $10^{-9}\,\mu\,$mol/cell/day
    \\
    $K_C$ &  $C$ uptake half saturation constant & 1.5 &$\mu\,$mol/L
    \\
    $K_N$ &  $N$ uptake half saturation constant & 1.25 &$\mu\,$mol/L
    \\
    $m_D$ & diatom mortality rate  &  0.1 & day$^{-1}$
    \\
    $\QCMin$ &   min $C$  cellular quota&  0.5 & $10^{-9}\,\mu\,$mol/cell
    \\
    $\QNMin$ &  min $N$  cellular quota& 10 &  $10^{-9}\,\mu\,$mol/cell
    \\
    $\mu_D$ & max.  diatom growth rate&  1.24 & day$^{-1}$
    \\
    $\mu_M$ & max. TEP production rate &  8.2 & $10^{-9}\,$ g Xeq./cell/day  
    \\
    $\Theta_D$ &  max. $N$ consumption rate for diatom growth& 4.5 &$10^{-9}\mu\,$ mol/cell/day
    \\
    $\Theta_M$ &  max. $C$ consumption rate for TEP production & 1000 &$10^{-9}\mu\,$ mol/cell/day
    \\
    $\alpha$ &  $C:N$ stoichiometric ratio  &  16 &  without unit
    \\ \hline
  \end{tabular}
  \label{tab:medel-param}
\end{table}
}
\paragraph{Nitrogen uptake.} 
Nitrogen is up-taken by diatoms among three inorganic sources: ammonium, nitrate and nitrite. Only nitrate uptake is  considered here, as it usually is the main source of nitrogen, see \textit{e.g.} \cite{serra1978nitrate}.
To increase the effects of nitrogen limitation during bloom, a low value of $\Nin$ = 15  $\mu\,$mol/L is set, relatively to the range of nitrate input concentration for chemostat experiments of 14-240 $\mu\,$mol/L given in \cite{Klausmeier}.
In \cite[table A46]{joergensen1979handbook} are reported  half saturation constants for nitrate uptake for various species of marine diatoms which range from 0.45 to 1.87 
 $\mu\,$mol/L.
 Maximal  nitrate uptake rate for various species of diatoms can be found in 
\cite[table A60]{joergensen1979handbook} ranging from 72 to 384  $10^{-9}\,\mu\,$mol/cell/day and a low value of $V^N_{\max}=70 ~10^{-9}\,\mu\,$mol/cell/day is set here to enhance the effects of nitrate limitation after a bloom.
Minimal cellular quota in nitrate 
$\QNMin= 45.4 \; 10^{-9}\,\mu\,$mol/cell
is used in \cite{Klausmeier}, 
however this value is  too high to allow TEP production
and a smaller value of $\QNMin= 10 \; 10^{-9}\,\mu\,$mol/cell 
will be arbitrary considered here.
\paragraph{Carbon uptake.} 
Carbon nutrient is available either as dissolved CO$_2$
or as HCO$_3^-$: dissolved CO$_2$ has a smaller and variable concentration, between 5 and 25 $\mu\,$mol/L whereas HCO$_3^-$ has a stable and much higher concentration of 2000 $\mu\,$mol/L, we will then only consider HCO$_3^-$ as source of carbon
with an input concentration of $\Cin$=2000 $\mu\,$mol/L.
In \cite{burkhardt2001co2},
the half saturation constant for HCO$_3^-$ uptake 
has been evaluated to 1-2  $\mu\,$mol/L and the 
maximal   HCO$_3^-$ uptake rate to 175 - 230 $\mu\,$mol/(mg$\,$Chl$_{\rm a}$)/h  
for diatom \textit{Thalassiossira Weissflogii} (Chl$_{\rm a}$ standing for type a chlorophyll). With \cite[table A30]{joergensen1979handbook} the cellular content of Chl$_{\rm a}$ is of 8.1 $10^{-11}$ mg for \textit{Thalassiossira} species and we get a range of 340 to  450  $10^{-9}\,\mu\,$mol/cell/day for $V^C_{\max}$.
The minimal cell quota in carbon $\QCMin$ could not have been found in the literature.
We propose an estimation based on \cite{johnston1996inorganic} 
where it is reported 
that diatoms are able to accumulate inorganic carbon only when its external concentration is low (when it is of 200 $\mu\,$mol/L the cell concentration in inorganic carbon is of 400 $\mu\,$mol/L),
but meanwhile unable to accumulate inorganic carbon when  its external concentration is at its normal state of 2000 $\mu\,$mol/L (then the cell concentration in inorganic carbon remains above 1470 $\mu\,$mol/L).
Assuming a cell volume of 750 $\mu\,$m$^3$, see \textit{e.g.} \cite{chen2011effect}, this suggests that $\QCMin \ge 0.3\,10^{-9}\,\mu\,$mol/cell and that $\QCMin \le 1.5\,10^{-9}\,\mu\,$mol/cell.
\paragraph{Other parameters.} 
The chemostat dilution rate is set to $a=$0.59 day$^{-1}$ as for the chemostat numerical simulations in \cite{Klausmeier}, the C:N Redfield ratio is $\alpha=16$ and the mortality rate is set to 0.1 day$^{-1}$, a generic value for phytoplankton species 
given in \cite[table A72]{joergensen1979handbook}.
The maximal growth rate for diatom \textit{Thalassiossira Weissflogii}
is set to $\mu_D=1.24$ day$^{-1}$ as measured in \cite{garcia2012effect},
and chemostat experiments of TEP production in \cite{fukao2012effect}
show a maximal TEP production rate of  
$\mu_M = 8.2 \;10^{-9}\,$ g Xeq./cell/day  
for a marine diatom in the pacific ocean.
\\
The two remaining parameters $\Theta_D$ and $\Theta_M$ could not be found in the literature.
The maximal nitrate consumption rate for diatom growth $\Theta_D$ can
be estimated from the diatom maximal growth rate  $\mu_D=1.24$ day$^{-1}$.
In \cite[table A13]{joergensen1979handbook} is reported the carbon biomass for diatom \textit{Thalassiossira Weissflogii} which ranges between 
$250-450 \;10^{-9}\,\mu\,$mol C/cell, 
thus the 
maximal C consumption rate for diatom growth is of order $250-350\times 0.24 \;10^{-9}\,\mu\,$mol C/cell/day 
and using C:N  Redfield ratio for diatom growth 
$\alpha=16$ we propose the estimation 
$\Theta_D = 250-450\times 0.24/16 \;10^{-9}\,\mu\,$mol N/cell/day = 3.75-6.75 $\;10^{-9}\,\mu\,$mol N/cell/day.
\\
Finally no satisfactory estimation of $\Theta_M$ could be made and it 
will be set to $\Theta_M = 1000 \;10^{-9}\mu\,$ mol/cell/day 
leading to a maximal TEP concentration of order 0.1 g Xeq./L which is consistent with the experiments in \cite{fukao2012effect}.
\subsubsection{Numerical simulation}
\begin{figure}[h!] 
  \centering
  \input{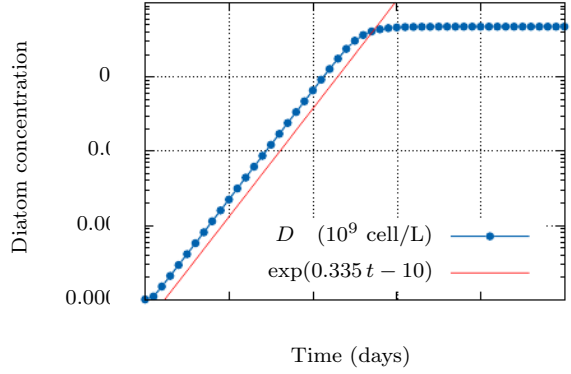}
  \caption{ Evolution of  diatom population in log scale}
  \label{fig:simu-sytem-S-D_log}
\end{figure}
\begin{figure}[h!]
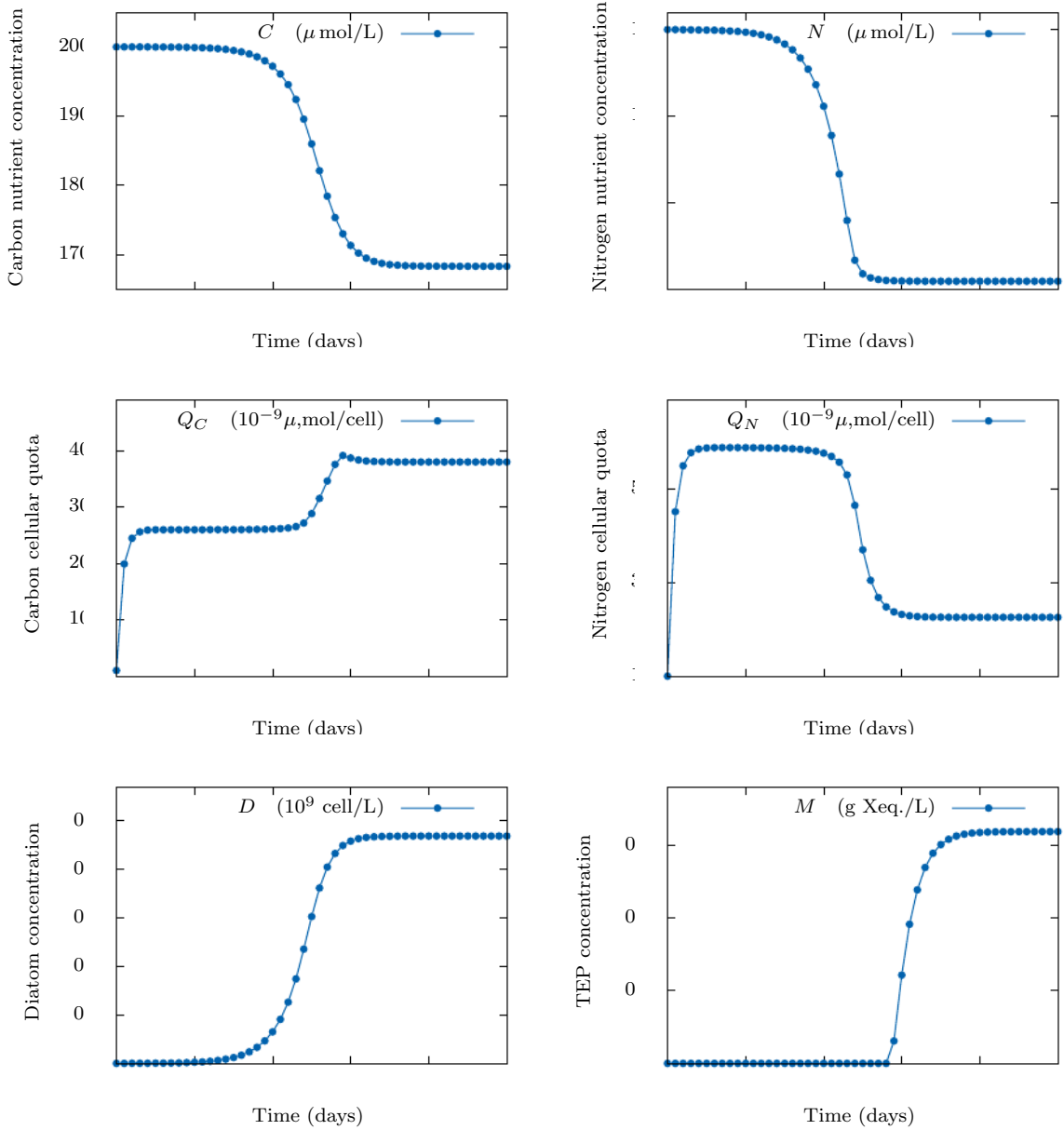
 
  \centering
  \mbox{
    \input{GV_2020_C.txt}
    ~~~~~~~~\input{GV_2020_N.txt}
  }
  \\
  \mbox{
    \input{GV_2020_QC.txt}
    ~~~~~~~~\input{GV_2020_QN.txt}
  }
  \\
  \mbox{
    \input{GV_2020_D.txt}
    ~~~~~~~~\input{GV_2020_M.txt}
  }
  \caption{ Evolution of system (\ref{eq:system-S}) with the parameters in table \ref{tab:medel-param}.}
  \label{fig:simu-sytem-S}
\end{figure}
The evolution of system (\ref{eq:system-S}) for the parameters in table \ref{tab:medel-param} is depicted on figures \ref{fig:simu-sytem-S} and \ref{fig:simu-sytem-S-D_log}.
System (\ref{eq:system-S}) is solved on a period of time of 50 days and the solution is recorded every days. At initial state the chemostat concentrations in nutrients $C(0)$ and $N(0)$ are set to $\Cin$ and $\Nin$, diatom cellular quota $Q_C(0)$ and $Q_N(0)$ are set to their associated minimal quota $\QCMin$ and $\QNMin$, the initial diatom concentration is $D(0)=10^5$ cell/L (roughly 1/500 of diatom population observed after the bloom) and no TEP is initially present in the chemostat ($M(0)=0$).
\\ \\ \indent
Figure \ref{fig:simu-sytem-S-D_log} specifically shows the exponential growth of diatom population up to a final plateau, with a growth rate numerically evaluated to 0.335.
\\ \\ \indent
Figure \ref{fig:simu-sytem-S} shows the global evolution 
of system (\ref{eq:system-S}) for its six variables $C$, $N$, $Q_C$, $Q_N$, $D$ and $M$.\\
In the 10 first days, nutrient supplies are 
sufficiently abundant to feed diatoms (whose  population is multiplied by 20) and  nutrient concentrations remain almost equal to the input concentration, meanwhile cellular quotas 
quickly increase to reach a first plateau. From day 10 to day 30 diatom population is again multiplied by 20, nutrient supplies are still abundant enough to feed the diatom though a slight decrease in  nutrient concentrations is observed, cellular quotas stay at their first plateau level.
Between day 20 and day 30 is observed the end of the exponential growth of  diatom population which now increases by a factor 7 only: nitrate becomes limiting, its concentration falls to roughly 1/30 of the input concentration stopping diatom growth, nitrate cellular quota decreases to reach a second plateau; meanwhile carbon nutrient remains abundant, carbon uptake is no longer balanced by cellular growth and cellular quota in carbon increases again. In the following period, between days 30 and 40, carbon cellular quota reaches a peak and stop increasing: TEP production is initiated when $Q_C/Q_N>\alpha$, carbon uptake is balanced by  TEP production and $Q_C$ reaches its second plateau level. In the last period of time, TEP reaches its maximal concentration and  the system stays in an equilibrium state.
\subsection{Parameter identification}
\label{parameter-identification}
System (\ref{eq:system-S}) is rewritten as,
\begin{equation}
  \label{eq:ode-param}
  \dfrac{\d{X}}{\dt} = F(P,X), \quad X(0) = X^0,
\end{equation}
with variable $X=(X_i)_{1\le i\le 6} = (C,N,Q_C, Q_N, D, M)$, with the  initial condition $X^0$ introduced in section \ref{sec:target-problem} ($X^0=(\Cin, \Nin, \QCMin, \QNMin, D(0)=10^5\,{\rm cell/L}, M(0)=0)$) and
where $P\in \R^p$  are the model parameters assumed to be constant: the vector $P$ is composed of the 15 model parameters excepted $a$, $\Cin$, $\Nin$ and $\alpha$ which are considered as problem data, so that $P\in \R^p$ with $p=11$.
\\
The target solution $X^{\rm tg}$ is the solution of problem (\ref{eq:ode-param}) with the parameters $P^{\rm tg}$ in table \ref{tab:medel-param}.
The same final time $T=50$ j as on figure \ref{fig:simu-sytem-S} is considered. A sampling period $\delta t = T /m$ (for an integer $m$) is introduced and the associated sampling time instants are the $t_j = j \delta t$ for $j=1, \cdots, m$.
Our goal will be to minimise the following cost functional $\mathcal{L}$,
\begin{equation}
  \label{eq:func-L}
  \mathcal{L}(P) = \sum_{j=1}^{m} \sum_{i=1}^{6} r_{ij}(P)^2,
  \quad  r_{ij}(P) := X_i(t_j) - X^{\rm tg}_i(t_j)
\end{equation}
which obviously has value 0 when $P=P^{\rm tg}$.
\\
The key argument is that, following theorem 
\ref{theo:main-result}, it is possible to differentiate $X(t)$ with respect to $P$ and thus to compute a gradient for the functional $\mathcal{L}$.
More precisely, 
by differentiating the ODE in \ref{eq:ode-param},
$\dfrac{\partial  X}{\partial P}$
 satisfy the  following ODE,
\begin{equation}
  \label{eq:ode-dX_dP}
  \dfrac{\d}{\dt}\left (  \dfrac{\partial  X}{\partial P}\right )
  =
  \dfrac{\partial F}{\partial P} (X,P) + \dfrac{\partial F}{\partial t} (X,P)\dfrac{\d{X}}{\d{X}},
  \quad \dfrac{\partial  X}{\partial P}(0) = 0,
 \end{equation}
where it has to be noticed that the initial condition $\dfrac{\partial  X}{\partial P}(0) = 0$ follows from the fact that the same initial condition $X(0) = X^0$ is imposed for $X(t)$ independently with $P$.
\\
Then, the derivative $\dfrac{\partial X}{\partial P} (t)$ will be computed numerically by solving the coupled ODE system \ref{eq:ode-param}~\ref{eq:ode-dX_dP}.
Thus, the evaluation of $\nabla \mathcal{L}(P)$ requires the resolution of the ODE system \ref{eq:ode-param}~\ref{eq:ode-dX_dP} over the time interval $[0,T]$, the storage of $X(t_j)$ and of $\partial X/\partial P(t_j)$ at every sampling time instants $t_j$ to get,
\begin{displaymath}
  \nabla \mathcal{L}(P) = 2 \sum_{j=1}^{m} \sum_{i=1}^{6} \left (
  X_i(t_j) - X^{\rm tg}_i(t_j) 
  \right )
  \dfrac{\partial  X_i}{\partial P}(t_j).
\end{displaymath}
The computation of $\nabla \mathcal{L}(P)$
will be held using the Runge Kutta 4 algorithm and 
a time step of $0.01$ day ensuring a very accurate 
computation of $X(t)$ and $\partial  X/\partial  P(t)$, 
for which the error has been evaluated to be below 1E-7 for the $L^\infty$ norm based on a numerical convergence analysis.
\subsection{Minimisation algorithm}
\paragraph*{The Gauss Newton method}
The  Gauss Newton method to solve a least square problem is briefly recalled.
Consider the functional to be minimized, for $P\in \R^p$,
\begin{displaymath}
  \mathcal{L}(P) = \mathbf{r}^T \mathbf{r}, \quad 
  \mathbf{r} = (r_i(P))_{1\le i \le n}.
\end{displaymath}
It is minimized starting from an initial guess $P^0$ 
and computing successive approximations $P^{i+1} = P^i + \delta$, 
where the increment $\delta$ is computed at each step as the solution of the linear system 
$J^TJ \delta = -J^T \mathbf{r}(P^i)$, 
where $J \in \R^{n\times p}$ is the Jacobian matrix of $\mathbf{r}$ at point $P_i$.
Note that the system matrix $J^TJ\in \R^{p\times p}$ is symmetric positive and moreover definite in practice.
\\
The algorithm is stopped when $\|\mathbf{r}\|_2 \le \varepsilon$, or more precisely in our case when 
\begin{equation}
\label{param-id-tol}
\dfrac{
  \mathcal{L}(P)^{1/2} 
}
{
| X^{\rm tg} |_2
}
\le {\rm tol}, \quad 
| X^{\rm tg} |_2^2 := \sum_{j=1}^{m} \sum_{i=1}^{6} X^{\rm tg}_i(t_j)^2  
\end{equation}
for a given tolerance ${\rm tol}$.
\\ \\
The advantage of Gauss Newton algorithm is that it is able to deliver very accurate numerical solutions together with very fast convergence properties. 
However its main drawback is to induce instabilities, especially when the initial guess is not accurate enough, leading to numerical blow-ups in the resolution of ODE (\ref{eq:ode-param}). A modified Gauss-Newton algorithm can be considered, as detailed below, to improve the method stability.
\paragraph*{Modified  Gauss Newton method}
At every step of the Gauss Newton method, the increment $\delta$
 is now considered as a descent direction.
ODE (\ref{eq:ode-param}) is numerically solved with parameters $P^i + h_k \delta$ with a decreasing $h_k$ starting at $h_0 = 1$ in order to detect the presence of instabilities and more precisely of a numerical blow-up.
Once a safe value of $h_k \in (0,1]$ has been determined, a golden section search is done to evaluate a best descent step $h \in (0, h_k]$ and the present algorithm step ends by setting $P^{i+1} = P^1 + h \delta$.
\\ \\
Instabilities remain problematic even with the modified version of the Gauss Newton method. Alternatively a classical gradient algorithm can also be used.
Such an algorithm is stable but after a few steps (2 or 3 steps in general) it stops moving, delivering numerical solutions of very poor accuracy.
It can however be considered in order to improve the initial guess quality.
\paragraph*{The minimisation method used in practice}
None of the previous methods (Gauss Newton, modified Gauss Newton and gradient method) being fully satisfactory, a combination of these methods will be considered.
The first remark is that the Gauss Newton method is very accurate and fast once the initial guess $P^0$ is close enough to the target solution $P^{\rm tg}$.
Then, the following combination of methods is used to build an accurate initial guess, and the algorithm ends with a Gauss Newton with a small tolerance set to ${\rm tol}=10^{-10}$ in \eqref{param-id-tol} and a maximal number of iterations set to 50.
\\
First of all we avoid performing directly the parameter identification on the whole time period $[0,T]$ and instead try to predict $P$ on a shorter time period $[0, T_{1/2}]$ (with $T_{1/2} = 20 $ days). Then these predicted parameters are used as an initial guess for a second parameter identification now on the time period $[0, T_{1/2}+1]$ and so on up to $T_{1/2}+k=T$.
\\
At every stage we start with a few steps (namely 2) of the gradient method.
Then the modified Gauss Newton method is applied, with at most 50 iterations and with a rough tolerance of $10^{-4}$ and eventually the Gauss Newton method is launched with also a maximal number of iterations of 50 and a tolerance of $10^{-4}$.
\subsection{Numerical results}
The algorithm presented in the previous section is evaluated by setting the initial guess components to
$P^0_i = P^{\rm tg}_i ( 1 + \theta_i) $ where the $\theta_i$ are randomly generated numbers of given amplitude $\varepsilon$, 
$\theta_i\in [-\varepsilon, \varepsilon]$.
Parameter identification is launched with this initial guess and we record:
\begin{enumerate}
\item whether it succeeded or failed to find a solution (failure being associated with a blow-up in the resolution of ODE (\ref{eq:ode-param}),
\item the final residual defined in \eqref{param-id-tol},
\item the maximal relative error $\|P^{\rm num} - P^{\rm tg}\|_\infty / \|P^{\rm tg}\|_\infty$ between the numerically determined parameters $P^{\rm num}$  and the target parameters.
\end{enumerate}
For a given amplitude $\varepsilon$, 
a number of 200 such experiments are done and the results are statistically presented in tables \ref{tab:res-1}, \ref{tab:res-0.5} and \ref{tab:res-0.25}
for three different sampling period of  1 day, half a day and a quarter of a day respectively.
{\small
\begin{table}[h!]
  \centering
  \begin{tabular}[h!]{l|c|c|c|c|c|c|c}
    Amplitude $\varepsilon$ 
    &1 \% & 5 \%& 10 \%& 15 \%& 20 \%&5 \% &50 \%
    \\ \hline    
    Success rate (in \%) 
    &87&88&72.5&55.5&42&32.5&19.5
    \\
    Mean residual 
    &1E-7&2E-7&9E-7&3E-7&0.4&0.2&0.5
    \\
    Mean max. error (in \%) 
    &5E-5&3E-4&2E-2&1.0&12.&29.&26.
  \end{tabular}
  \caption{Numerical results for a sampling period of 1 day}
  \label{tab:res-1}
\end{table}
\begin{table}[h!]
  \centering
  \begin{tabular}[h!]{l|c|c|c|c|c|c|c}
    Amplitude $\varepsilon$ 
    &1 \% & 5 \%& 10 \%& 15 \%& 20 \%&5 \% &50 \%
    \\ \hline    
    Success rate (in \%) 
    &76.5&85&67.5&57.5&49.5&37.5&11.5
    \\
    Mean residual 
    &6E-5&5E-6&6E-6&5E-6&4E-5&0.4&0.2
    \\
    Mean max. error (in \%) 
    &1E-3&1E-4&1E-3&2E-3&7E-2&13.&19.
  \end{tabular}
  \caption{Numerical results for a sampling period of 0.5 day}
  \label{tab:res-0.5}
\end{table}
\begin{table}[h!]
  \centering
  \begin{tabular}[h!]{l|c|c|c|c|c|c|c}
    Amplitude $\varepsilon$ 
    &1 \% & 5 \%& 10 \%& 15 \%& 20 \%&5 \% &50 \%
    \\ \hline    
    Success rate (in \%) 
    &84&90&76.5&65&49&37&14.5
    \\
    Mean residual 
    &1E-6&1E-6&2E-6&2E-6&1E-6&0.6&0.3
    \\
    Mean max. error (in \%) 
    &3E-4&3E-4&5E-4&3E-2&1E-2&14.&26.
  \end{tabular}
  \caption{Numerical results for a sampling period of 0.25 day}
  \label{tab:res-0.25}
\end{table}
}
\\ \\
A first fact is that, even for a small perturbation amplitude of 1 \%, there is no full guaranty of converging to a numerical solution because of instability's.
The algorithm success rate is quite large for small perturbation amplitudes but decreases and become quite small for a {larger amplitudes of 25 \%. 
The quality of the numerical solution can be easily controlled by the final residual: once the numerical solution is associated to a very small residual (below 1E-5, which usually occurs when the algorithm converges) then the maximal relative error between the numerical solution and the target parameters is very small, most generally below 1E-2 \%.
We point out that this figure is a max error and that in practice most of the parameters are determined with an accuracy below 1E-10 \% whereas few of them are less accurately predicted.
When the residual is higher, of order 1 or more, the parameters are less accurately predicted but still with a correct order of magnitude. Again, we point out that only a few of the parameters are not accurately predicted when most of them, with $\varepsilon = 50 \%$ \textit{e.g.}, are captured with a relative error below 1 \%.
This seems to indicate the presence of many local minima for the functional $\mathcal{L}$ around $P^{\rm tg}$.
\\
As expected, results are better when decreasing the sampling period, however, the benefits are not very important and a sampling period of 1 day seems to be relevant for this present application.
\\ \\
As a conclusion, parameter identification of model (\ref{eq:system-S}) seems to be feasible with chemostat experiment data.
The presented algorithm allows very accurate parameter identification but requires a first initial guess of high quality because of remaining strong unsuitability features.
It seems necessary to go to a coupled approach, first using a generic minimisation method for the functional $\mathcal{L}$ in (\ref{eq:func-L}) (such as a genetic algorithm \textit{e.g.}) able to predict a good initial guess, and then to end the parameter identification with a Gauss Newton type method. This is let to further developments on the subject.
\section{Acknowledgments}

The authors would like to thank Pr. Xavier Mari from IRD (Institute of Research for Development)  for valuable discussions and references he provided.
\\[0.2cm]
The authors would like to thank the National Label of Excellence I-SITE: Energy and Environment Solutions (E2S - UPPA) for the financial support given to this work. 


\end{document}